\newcounter{satznum}
\newtheorem{theorem}{Theorem}[satznum]
\newtheorem{corollary}[theorem]{Corollary}
\newtheorem{proposition}[theorem]{Proposition}
\newtheorem{example}[theorem]{Example}
\newtheorem{lemma}[theorem]{Lemma}
\newenvironment{remark}
{\begin{trivlist}\item[]{\bf Remark.}}
	{\end{trivlist}}
\newenvironment{proof}
{\begin{trivlist}\item[]{\bf Proof.}}
	{\end{trivlist}}
	\gdef\cz{{\mathbb C}} 
	\gdef\me{{\mathbb E}} 
	\gdef\nz{{\mathbb N}} 
	\gdef\pr{{\mathbb P}} 
	\gdef\rz{{\mathbb R}} 
\newcounter{todocounter}
\def\@MRExtract#1 #2!{#1}
\newcommand{\MR}[1]{
	\xdef\@MRSTRIP{\@MRExtract#1 !}
	\href{http://www.ams.org/mathscinet-getitem?mr=\@MRSTRIP}{MR\@MRSTRIP}}
\begin{document}
\section*{Scaling limits for the block counting process and the fixation line of a class of $\Lambda$-coalescents}
{\sc Martin M\"ohle} and {\sc Benedict Vetter}\footnote{Mathematisches
	Institut, Eberhard Karls Universit\"at T\"ubingen, Auf der Morgenstelle 10,
	72076 T\"ubingen, Germany, E-mail addresses: martin.moehle@uni-tuebingen.de,
	benedict.vetter@uni-tuebingen.de}
\begin{center}
	\today
\end{center}
\begin{abstract}
	
	\vspace{2mm}
	
	\noindent We provide scaling limits for the block counting process and the fixation line of $\Lambda$-coalescents as the initial state $n$ tends to infinity under the assumption that the measure $\Lambda$ on $[0,1]$ satisfies $\int_{[0,1]} u^{-1}(\Lambda-b\lambda)({\rm d}u)<\infty$ for some $b>0$. Here $\lambda$ denotes the Lebesgue measure. The main result states that the block counting process, properly logarithmically scaled, converges in the Skorohod space to an Ornstein--Uhlenbeck type process as $n$ tends to infinity. The result is applied to beta coalescents with parameters $1$ and $b>0$. 
	We split the generators into two parts by additively decomposing $\Lambda$ and then prove the uniform convergence of both parts separately.
	
	\noindent Keywords: coalescent process; block counting process; fixation line; Bolthausen--Sznitman coalescent; Ornstein--Uhlenbeck type process; self-decomposability; generalized Mehler semigroup; time-inhomogeneous process; weak convergence.
	
	\vspace{2mm}
	
	\noindent 2020 Mathematics Subject Classification:
	Primary
	60J90; 
	Secondary
	60J27.  
\end{abstract}

\subsection{Introduction} \label{intro} 

The $\Lambda$-coalescent, independently introduced by Pitman \cite{pitman99} and Sagitov \cite{sagitov99}, is a Markov process $\Pi=(\Pi_t)_{t\geq 0}$ with c\`{a}dl\`{a}g paths, values in the space of partitions of $\nz:=\{1,2,\ldots\}$, starting at time $t=0$ from the partition $\{\{1\},\{2\},\ldots\}$ of $\nz$ into singletons, whose behavior is fully determined by a finite measure $\Lambda$ on the Borel subsets of $[0,1]$. If the process is in a state with $k\geq 2$ blocks, any particular $j\in\{2,\ldots,k\}$ blocks merge at the rate
\begin{eqnarray*}
	\lambda_{k,j}=\int_{[0,1]} u^{j-2}(1-u)^{k-j}\Lambda({\rm d}u).
\end{eqnarray*} 
The reader is referred to \cite{berestycki09} for a survey of $\Lambda$-coalescents. Unless $\Lambda(\{1\})>0$, either $\Pi_t$ has infinitely many blocks for all $t>0$ almost surely or finitely many blocks for all $t>0$ almost surely. The $\Lambda$-coalescent is said to stay infinite in the first case and to come down from infinity in the second. An atom of $\Lambda$ at $1$ corresponds to the rate of jumping to the trivial (and absorbing) partition consisting only of the block $\nz$. For $t\geq0$ let $N_t^{(n)}$ denote the number of blocks of the restriction $\Pi_t^{(n)}:=\{B\cap[n]|B\in\Pi_t, B\cap [n]\neq\emptyset\}$ of $\Pi_t$ to $[n]:=\{1,\ldots,n\}$. The \textit{block counting process} $N^{(n)}:=(N_t^{(n)})_{t\geq0}$ is a $[n]$-valued conservative Markov process with c\`{a}dl\`{a}g paths that jumps from $k\geq 2$ to $j\in\{1,\ldots,k-1\}$ at the rate
\begin{eqnarray*}
	q_{k,j}
	=\binom{k}{j-1}\int_{[0,1]} u^{k-j-1}(1-u)^{j-1}\Lambda({\rm d}u).
\end{eqnarray*}
Clearly, $N^{(n)}$ starts in $n$ at time $t=0$, has decreasing paths and eventually reaches the absorbing state $1$. Our main objective is to analyze the limiting behavior of the block counting process of $\Lambda$-coalescents that stay infinite as the initial state $n$ tends to infinity by determining suitable scaling constants. For $\Lambda$-coalescents with dust the scaling is $n$. A block $B \in \Pi_t$ of size $|B|=1$ is called a singleton. The number of singletons in $[n]$ divided by $n$ converges to the frequency of singletons as $n$ tends to infinity, and, if $\int_{[0,1]} u^{-1}\Lambda({\rm d}u)<\infty$ and $\Lambda(\{0\})=0$, then the frequency of singletons is strictly positive or the $\Lambda$-coalescent is said to have dust. It is known that $N^{(n)}/n$ converges in the Skorohod space $D_{[0,1]}[0,\infty)$ to the frequency of singletons process as $n$ tends to infinity \cite[Theorem 2.13]{gaisermoehle16}. 

The Bolthausen--Sznitman coalescent, where $\Lambda$ is the uniform distribution on $[0,1]$, is an example of a dust-free $\Lambda$-coalescent that stays infinite \cite{bolthausensznitman98}. In this case it was shown in \cite{moehle15} via moment calculations that $(N_t^{(n)}/n^{\exp(-t)})_{t\geq0}$ converges in $D_{[0,\infty)}[0,\infty)$ to the Mittag--Leffler process as $n$ tends to infinity. 

We provide unified proofs for both limit theorems, stated later as Corollaries \ref{thm_Dust} and \ref{thm_BS}, and extend the convergence to $\Lambda$-coalescents, where the measure $\Lambda$ is the sum of the uniform distribution on $[0,1]$ multiplied by a constant $b\geq0$ and a measure that corresponds to a coalescent with dust. This includes the $\Lambda$-coalescent where $\Lambda=\beta(1,b)$ is the beta distribution with parameters $1$ and $b>0$. Theorem \ref{thm_main_convergence} below states that $(\log N_t^{(n)}-\exp(-bt)\log n)_{t\geq0}$ converges in $D_{\rz}[0,\infty)$ as $n$ tends to infinity. The logarithmic version of the convergence result has the advantage of putting the limiting process in Theorem \ref{thm_main_convergence} to a class of processes, which has been studied in the literature. The limiting process can be represented as the solution of the Langevin equation with L\'{e}vy noise instead of a Brownian motion and is sometimes called Ornstein--Uhlenbeck type process \cite{satoyamazato84} or generalized Ornstein--Uhlenbeck process. First Wolfe \cite{wolfe82} studied Ornstein--Uhlenbeck type processes on $\rz$, later Jurek and Vervaat \cite{jurekvervaat83} on Banach spaces and Sato and Yamazato \cite{satoyamazato84} on $\rz^d$, then Applebaum \cite{applebaum07} on Hilbert spaces. Under the logarithmic moment condition (\ref{eq_lim_integral_cond}) the marginal distributions of the Ornstein--Uhlenbeck type process converge in distribution as time tends to infinity to the unique stationary distribution. The stationary distribution is self-decomposable. A real-valued random variable $S$ is \textit{self-decomposable} if for every $\alpha\in[0,1]$ there exists a random variable $S_{\alpha}$ independent of $S$ such that $S$ has the same distribution as $\alpha S+S_{\alpha}$. If $\phi$ is the characteristic function of $S$ then $S$ is self-decomposable if and only if $x\mapsto\phi(x)/\phi(\alpha x),$ $x\in\rz,$ is the characteristic function of a real-valued random variable for every $\alpha\in[0,1]$. A distribution $\mu$ on $\rz$ is self-decomposable if there exists a self-decomposable random variable with distribution $\mu$. Conversely, every self-decomposable distribution can be obtained in this way. For the $\beta(1,b)$-coalescent (\ref{eq_lim_integral_cond}) is satisfied. 

The fixation line $L=(L_t)_{t\geq0}$ is a $\nz$-valued Markov process that jumps from $k\in\nz$ to $j\in\{k+1,\ldots\}$ at the rate
\begin{eqnarray*}
	\gamma_{k,j}=\binom{j}{j-k+1}\int_{[0,1]}u^{j-k-1}(1-u)^k\Lambda({\rm d}u). 
\end{eqnarray*}
The fixation line is the 'time-reversal' of the block counting process, in the sense that the hitting times $\inf\{t\geq0|N_t^{(n)}\leq m\}$ and $\inf\{t\geq0|L_t^{(m)}\geq n\}$ share the same distribution \cite[Lemma 2.1]{henard15}. Equivalently, the process $L$ is Siegmund-dual \cite{siegmund76} to the block counting process, i.e., if $L^{(m)}=(L_t^{(m)})_{t\geq0}$ denotes the fixation line starting in $L_0^{(m)}=m,$ $m\in\nz,$ at time $t=0$ then (see \cite{KuklaMoehle18})
\begin{eqnarray}
	\pr(L_t^{(m)}\geq n)=\pr(N_t^{(n)}\leq m),\qquad m,n\in\nz,t\geq 0.
\label{eq_res_dual_discrete}
\end{eqnarray}
For a thorough definition of the fixation line see \cite{henard15} and the references therein. Theorem \ref{thm_res_fix_line} states that $(\log L_t^{(n)}-\exp(bt)\log n)_{t\geq0}$ converges in $D_{\rz}[0,\infty)$ as the initial value $L_0^{(n)}=n$ tends to infinity.

The article is organized as follows. In Section \ref{sec_results} the main convergence result (Theorem \ref{thm_main_convergence}) is stated and applied to the $\beta(1,b)$-coalescent with parameter $b>0$. The limiting process is an Ornstein--Uhlenbeck type process. Well-known results are applied to our setting in Section \ref{sec_limiting_process}. In particular, the generator of the limiting process is determined. The line of proof is in some sense reversed. First we prove Corollaries \ref{thm_Dust} and \ref{thm_BS} in Sections \ref{sec_proof_dust} and \ref{sec_proof_BS} by showing the convergence of the generators of the (logarithm of the) scaled block counting processes. The decomposition of $\Lambda$ into the uniform distribution multiplied by a constant and a measure that corresponds to a coalescent with dust is transferred to the generators. This enables us to use relations obtained in Sections \ref{sec_proof_dust} and \ref{sec_proof_BS} to prove Theorem \ref{thm_main_convergence} in Section \ref{sec_main}. The proof of Theorem \ref{thm_res_fix_line} is conducted in Section \ref{sec_proof_fix_line} with similar methods.

\noindent \textbf{Notation.} Let $E$ be a complete separable metric space. The Banach space $B(E)$ of bounded measurable functions $f:E\to\rz$ is equipped with the usual supremum norm $\|f\|:=\sup_{x\in E}|f(x)|$ and the Banach subspace $\widehat{C}(E)\subset B(E)$ consists of continuous functions vanishing at infinity. If $E\subseteq\rz^d$ for some $d\in\nz$ then $C_k(E)$ denotes the space of $k$-times continuously differentiable functions. The Borel-$\sigma$-field on $\rz$ is denoted by $\mathcal{B}$, $\Lambda$ is a (non-zero) finite measure on $\mathcal{B}\cap[0,1]$ with $\Lambda(\{0\})=\Lambda(\{1\})=0$ and $\lambda$ denotes the Lebesgue measure on $([0,1],\mathcal{B}\cap[0,1])$. The generators, usually denoted by $A$, are understood to be defined on (a subspace of) $\widehat{C}(E)$. For a measure space $(\Omega,\mathcal{F},\mu)$ and $p>0$ the space of measurable functions $f:\Omega\to\rz$ with $\int |f|^p {\rm d}\mu<\infty$ is denoted by $L^{p}(\mu)$ or in short, $L^{p}$. 

\subsection{Results} \label{sec_results}

Let $\Lambda$ be a finite measure on $([0,1],\mathcal{B}\cap[0,1])$ with no mass at $0$ and $1$. For $b\geq0$ the map $B\mapsto(\Lambda-b\lambda)(B)=\Lambda(B)-b\lambda(B),$ $B\in\mathcal{B}\cap[0,1],$ might possibly be a signed measure. Hahn's decomposition theorem states the existence of some $A\in\mathcal{B}\cap[0,1]$ such that $(\Lambda-b\lambda)^{+}(B):=(\Lambda-b\lambda)(B\cap A),$ $B\in\mathcal{B}\cap[0,1],$ and $(\Lambda-b\lambda)^{-}(B):=-(\Lambda-b\lambda)(B\cap A^c),$ $B\in\mathcal{B}\cap[0,1],$ define nonnegative measures. The two nonnegative measures $(\Lambda-b\lambda)^{+}$ and $(\Lambda-b\lambda)^{-}$ constitute the Jordan decomposition of $\Lambda-b\lambda$. Using this decomposition one can integrate with respect to a signed measure by defining $\int f{\rm d}(\Lambda-b\lambda):=\int f{\rm d}(\Lambda-b\lambda)^{+}-\int f{\rm d}(\Lambda-b\lambda)^{-}$ for $f\in L^{1}((\Lambda-b\lambda)^{+})\cap L^{1}((\Lambda-b\lambda)^{-})$. The assumption of Theorem \ref{thm_main_convergence} below is the following.

\vspace{2mm}

\noindent\textbf{Assumption A.} There exists $b\geq0$ such that $\int_{[0,1]} u^{-1}(\Lambda-b\lambda)^{+}({\rm d}u)<\infty$ and $\int_{[0,1]} u^{-1}(\Lambda-b\lambda)^{-}({\rm d}u)<\infty$.

\vspace{2mm}

\noindent Note that the constant $b\geq0$ is uniquely determined by $\Lambda$, if it exists. Schweinsberg's criterion \cite{schweinsberg00} shows that the $\Lambda$-coalescent does not come down from infinity under Assumption A, see Lemma \ref{lem_app_not_cdi} in the appendix. Moreover, the $\Lambda$-coalescent is dust-free if $b>0$. Assumption A is for example satisfied, if $\Lambda$ has density $f\in C_1([0,1])$ with respect to $\lambda$ for which $\lim_{u\searrow 0}f'(u)$ exists and is finite. In this case $b=\lim_{u\searrow0}f(u)$. 

Suppose that $\Lambda$ satisfies Assumption A. Let $\Gamma(z):=\int_{0}^{\infty}u^{z-1}e^{-u}{\rm d}u,$ ${\rm Re}(z)>0,$ denote the gamma function and $\Psi(z):=(\log\Gamma)^{\prime}(z)=\Gamma'(z)/\Gamma(z),$ ${\rm Re}(z)>0,$ the digamma function. Define 
\begin{eqnarray}
	a:=b(1+\Psi(1))-\int_{[0,1]} u^{-1}(\Lambda-b\lambda)({\rm d}u)
\label{eq_results_a}
\end{eqnarray}
and $\psi:\rz\to\cz$ via
\begin{eqnarray}
	\psi(x):=iax+\int_{[0,1]}(e^{ix\log(1-u)}-1+ixu)u^{-2}\Lambda({\rm d}u),\qquad x\in\rz.
\label{eq_results_Psi}
\end{eqnarray}
Substituting $g:(0,1)\to\rz,$ $g(u):=\log(1-u),$ $u\in(0,1)$, 
shows that
\begin{eqnarray*}
	\psi(x)=iax+\int_{(-\infty,0)}(e^{ixu}-1+ix(1-e^{u}))\varrho({\rm d}u),\qquad x\in\rz,
\end{eqnarray*}
where the measure $\varrho$, defined via 
\begin{eqnarray}
	\varrho(A):=\int_{g^{-1}(A)}u^{-2}\Lambda({\rm d}u)=\int_A(1-e^{u})^{-2}\Lambda_g({\rm d}u),\qquad A\in\mathcal{B},
\label{eq_res_varrho}
\end{eqnarray} 
satisfies $\int_{\rz}(u^2\wedge 1)\varrho({\rm d}u)<\infty$ and $\varrho(\{0\})=0$. Hence $\varrho$ is a L\'{e}vy measure and $e^{\psi(x)},$ $x\in\rz,$ is the characteristic function of an infinitely divisible distribution. 

\begin{theorem}
	Suppose that $\Lambda$ satisfies Assumption A. Then the possibly time-inhomogeneous 
	Markov process $X^{(n)}:=(X_t^{(n)})_{t\geq 0}:=(\log N_t^{(n)}-e^{-bt}\log n)_{t\geq 0}$ converges in $D_{\rz}[0,\infty)$ as $n\to\infty$ to the time-homogeneous Markov process $X=(X_t)_{t\geq 0}$ with initial value $X_0=0$ and semigroup $(T_t)_{t\geq0}$ given by
	\begin{eqnarray}
		T_tf(x):=\me(f(X_{s+t})|X_s=x)=\me(f(e^{-bt}x+X_{t})),\qquad x\in\rz,f\in B(\rz),s,t\geq 0,
	\label{eq_results_semigroup}
	\end{eqnarray}
	where $X_{t}$ has characteristic function $\phi_t$ given by
	\begin{eqnarray}
		\phi_t(x)=\exp\Big(\int_{0}^{t}\psi(e^{-bs}x){\rm d}s\Big),
	\qquad x\in\rz,t\geq 0,
	\label{eq_results_char_funct_X_t}
	\end{eqnarray}
	and $\psi$ is given by (\ref{eq_results_Psi}).
	\label{thm_main_convergence}
\end{theorem}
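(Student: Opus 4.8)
The plan is to prove convergence of the infinitesimal generators and then to invoke a standard limit theorem for sequences of Markov processes based on generator convergence, in the spirit of Ethier and Kurtz, applied to the space-time processes $t\mapsto(t,X_t^{(n)})$ in order to accommodate the time-inhomogeneity. Between the jumps of $N^{(n)}$ the process $X_t^{(n)}=\log N_t^{(n)}-e^{-bt}\log n$ moves deterministically with velocity $be^{-bt}\log n$, and it jumps by $\log j-\log k<0$ whenever $N^{(n)}$ jumps from $k$ to $j$; hence $X^{(n)}$ is piecewise deterministic with time-dependent generator
\begin{eqnarray*}
	(A_nf)(t,x)=be^{-bt}(\log n)f'(x)+\sum_{j=1}^{k-1}q_{k,j}\big(f(\log j-e^{-bt}\log n)-f(x)\big),
\end{eqnarray*}
where $k=k(n,t,x):=\exp(x+e^{-bt}\log n)=e^{x}n^{e^{-bt}}\in[n]$ is the unique state with $\log k=x+e^{-bt}\log n$ and $f$ is evaluated on the moving lattice $\{\log j-e^{-bt}\log n:j\in[n]\}$. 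Reading off the candidate limit generator from $\psi$ and from the mean reversion factor $e^{-bt}$ in the semigroup (\ref{eq_results_semigroup}) leads to
\begin{eqnarray*}
	(Af)(x)=-bxf'(x)+af'(x)+\int_{[0,1]}\big(f(x+\log(1-u))-f(x)+uf'(x)\big)u^{-2}\Lambda({\rm d}u),
\end{eqnarray*}
with $a$ as in (\ref{eq_results_a}); its precise form and a core are identified in Section \ref{sec_limiting_process}, where one also checks that $X$ is a Feller process, so that the martingale problem for $A$ on this core is well posed. Since moreover $X_0^{(n)}=0=X_0$, the whole task reduces to showing that, for every $f$ in the core and every $T>0$,
\begin{eqnarray*}
	\lim_{n\to\infty}\ \sup_{0\le t\le T}\ \sup_{j\in[n]}\big|(A_nf)(t,\log j-e^{-bt}\log n)-(Af)(\log j-e^{-bt}\log n)\big|=0.
\end{eqnarray*}

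To organise this I would decompose $\Lambda$. The rates $q_{k,j}$ depend linearly on $\Lambda$, and so do $\psi$ and, apart from the mean reversion term $-bxf'(x)$, the constant $a$; that term is tied to the uniform part of $\Lambda$. Writing $\Lambda=b\lambda+(\Lambda-b\lambda)^{+}-(\Lambda-b\lambda)^{-}$, which is legitimate by Assumption A, one obtains corresponding splittings $A_n=A_n^{(0)}+A_n^{(+)}-A_n^{(-)}$ and $A=A^{(0)}+A^{(+)}-A^{(-)}$, in which the summands indexed by $0$ carry the deterministic drift $be^{-bt}(\log n)f'$ respectively the term $-bxf'(x)$, while the two remaining summands are governed by the dust measures $(\Lambda-b\lambda)^{\pm}$. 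It then suffices to prove the uniform convergences $A_n^{(0)}\to A^{(0)}$, $A_n^{(+)}\to A^{(+)}$ and $A_n^{(-)}\to A^{(-)}$ separately and to add.

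For the summand indexed by $b\lambda$, which is $b$ times the Bolthausen--Sznitman situation, I would split the jump sum into a part compensated by the first-order term $u\,f'(x)$ (with $u:=1-j/k$ the relative jump size) and its compensator. The compensated part is a Riemann-type sum converging to $\int_{[0,1]}(f(x+\log(1-u))-f(x)+uf'(x))u^{-2}b\,{\rm d}u$, since the explicit rates satisfy $q_{k,j}\approx b/(ku^{2})$ for $u$ bounded away from $0$; the compensator, by the asymptotics of the harmonic numbers (equivalently of the digamma function), contributes $-b(\log k)f'(x)+b(1+\Psi(1))f'(x)+o(1)$. The term $-b(\log k)f'(x)$ combines with the deterministic drift $be^{-bt}(\log n)f'(x)$ to the mean reversion term $-bxf'(x)$ because $\log k=x+e^{-bt}\log n$, so that altogether $A_n^{(0)}(t,\cdot)\to A^{(0)}$. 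These are exactly the estimates carried out in Section \ref{sec_proof_BS} in the proof of Corollary \ref{thm_BS}, which I would reuse; what matters here is that the remainder bounds obtained there are uniform in $x$ over the lattice and uniform in $t\in[0,T]$, equivalently uniform as the effective number of blocks, which is of order $n^{e^{-bt}}$, ranges over all sufficiently large values. For the two summands indexed by the dust measures $(\Lambda-b\lambda)^{\pm}$ I would argue in the same way, now using Section \ref{sec_proof_dust} and the proof of Corollary \ref{thm_Dust}: for a generic finite measure $\mu$ on $(0,1)$ with $\int_{[0,1]}u^{-1}\mu({\rm d}u)<\infty$ the compensator is already summable without any logarithmic correction, so the generator with $\Lambda$ replaced by $\mu$ carries neither a mean reversion term nor an extra deterministic drift and converges to $\int_{[0,1]}(f(x+\log(1-u))-f(x))u^{-2}\mu({\rm d}u)$, which equals $A^{(\pm)}f(x)$ because the $u\,f'(x)$ compensator and the drift $-\big(\int_{[0,1]}u^{-1}\mu({\rm d}u)\big)f'(x)$ then simply recombine.

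The hard part will be to make all three generator convergences uniform over the moving lattice and uniform in $t\in[0,T]$. The core estimate is a comparison of the discrete sum $\sum_{j<k}q_{k,j}g(\log j)$ with the integral against $u^{-2}\Lambda({\rm d}u)$ for $g(\cdot)=f(\cdot-e^{-bt}\log n)$, in which one must control simultaneously the contribution of small $j$ and of $j$ comparable to $k$, and keep the Taylor remainders integrable near $u=0$. This last point is where Assumption A enters: although $\int_{[0,1]}u^{-1}\Lambda({\rm d}u)$ may be infinite, the identity $\log(1-u)+u=O(u^{2})$ makes the compensated integrand of order $u^{2}$, whereas for the two dust parts the finiteness of $\int_{[0,1]}u^{-1}(\Lambda-b\lambda)^{\pm}({\rm d}u)$ already suffices. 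Once the three convergences are in hand, adding them gives $A_nf\to Af$ in the sense displayed above, and the limit theorem quoted at the outset yields $X^{(n)}\Rightarrow X$ in $D_{\rz}[0,T]$ for every $T>0$, and hence in $D_{\rz}[0,\infty)$.
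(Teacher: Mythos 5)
Your proposal follows essentially the same route as the paper's proof: the same decomposition $\Lambda=b\lambda+(\Lambda-b\lambda)^{+}-(\Lambda-b\lambda)^{-}$ transferred to the rates and generators, reuse of the uniform estimates established in the proofs of Corollaries \ref{thm_Dust} and \ref{thm_BS} (the drift/harmonic-number term $R(k,x)$, the compensated Bolthausen--Sznitman sum, and the uncompensated dust sums, with the $uf'(x)$ terms recombining into the constant $a$), and the time-space process together with the Ethier--Kurtz generator-convergence theorem to handle the time-inhomogeneity. This matches the paper's argument in Section \ref{sec_main}, including the identification of the limit generator and core via Lemma \ref{lem_lim_process_generator} and the time-space core result.
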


\noindent Distributions with characteristic functions of the form (\ref{eq_results_char_funct_X_t}), where $\psi$ is more generally the characteristic exponent of an infinitely divisible distribution, have been studied in \cite{wolfe82} on $\rz$ and on more general state spaces in \cite{applebaum07}, \cite{jurekvervaat83} and \cite{satoyamazato84}. Since $\phi_{t+s}(x)=\phi_t(e^{-bs}x)\phi_s(x),$ $x\in\rz,$ for $s,t\geq0$, the semigroup $(T_t)_{t\geq0}$ belongs to the class of generalized Mehler semigroups \cite{bogachevroecknerschmuland96}. 
In particular, the semigroup $(T_t)_{t\geq0}$ is Feller, i.e., $T_t(\widehat{C}(\rz))\subseteq\widehat{C}(\rz)$ for each $t\geq0$ and $(T_t)_{t\geq0}$ is strongly continuous on $\widehat{C}(\rz)$. Hence the Markov process $X$ in Theorem \ref{thm_main_convergence} exists and has paths in $D_{\rz}[0,\infty)$.

The beta distribution $\beta(a,b)$ with parameters $a,b>0$ has density $u\mapsto\Gamma(a+b)/(\Gamma(a)\Gamma(b))\linebreak u^{a-1}(1-u)^{b-1},$ $u\in(0,1),$ with respect to Lebesgue measure on $(0,1)$. The class of beta coalescents are the processes for which $\Lambda=\beta(a,b)$ for some $a,b>0$. They have been extensively studied in the literature due to the easy computability of the jump rates
\begin{eqnarray}
	q_{k,j}=\frac{\Gamma(a+b)\Gamma(k+1)\Gamma(j-1+b)\Gamma(k-j-1+a)}{\Gamma(a)\Gamma(b)\Gamma(k-2+a+b)\Gamma(j)\Gamma(k-j+2)},\qquad j\in\{1,\ldots,k-1\},k\geq 2.
\label{eq_res_block_jump_rates_beta}
\end{eqnarray}
The beta coalescent comes down from infinity if and only if $0<a<1$ \cite[Example 15]{schweinsberg00}. If $a>1$ then the beta coalescent has dust. For $a=1$ the beta coalescent is dust-free and does not come down from infinity.

\begin{example}
	Suppose that $\Lambda=\beta(1,b)$ with $b>0$. From the observation stated below Assumption A we conclude that Assumption A is satisfied with the same constant $b$. The 'dust-part' $\Lambda-b\lambda$ has possibly negative density $u\mapsto b((1-u)^{b-1}-1),$ $u\in(0,1),$ with respect to Lebesgue measure on $(0,1)$. The underlying L\'evy measure $\varrho$ has density $f$ with respect to Lebesgue measure on $\rz\setminus\{0\}$ given by $f(u):=be^{bu}(1-e^u)^{-2}$ for $u<0$ and $f(u):=0$ for $u>0$. Calculations involving Gau\ss' representation \cite[p. 247]{whittakerwatson96} for the digamma function $\Psi$ (see Proposition \ref{prop_app_underlying_char_funct_BS} in the appendix) show that $a=b(1+\Psi(b))$ and 
	\begin{eqnarray}
		\psi(x)	=b((1-b)\Psi(b)-(1-b-ix)\Psi(b+ix)),\qquad x\in\rz.
	\label{eq_res_ex_local_1}	
	\end{eqnarray} 
	According to Theorem \ref{thm_main_convergence} the process $(\log N_t^{(n)}-e^{-bt}\log n)_{t\geq 0}$ converges in $D_{\rz}[0,\infty)$ as $n\to\infty$ to a Markov process $X=(X_t)_{t\geq0}$ with initial value $X_0=0$ and semigroup $(T_t)_{t\geq0}$ given by
	\begin{eqnarray*}
		T_tf(x):=\me(f(X_{s+t})|X_s=x)=\me(f(e^{-bt}x+X_t)),\qquad x\in\rz,f\in B(\rz),s,t\geq0,
	\end{eqnarray*}
	where $X_t$ has characteristic function $\phi_t$ given by (\ref{eq_results_char_funct_X_t}). Since $\int_{(1-e^{-1},1)}\log \log (1-u)^{-1}\Lambda({\rm d}u)=\int_{1-e^{-1}}^{1}\log \log (1-u)^{-1}b(1-u)^{b-1}{\rm d}u 
	<\infty$, the logarithmic moment condition of Lemma \ref{lem_lim_process_stat_dist} is satisfied and $X_t$ converges in distribution as $t\to\infty$ to the unique stationary distribution $\mu$ of $X$. The distribution $\mu$ is self-decomposable with characteristic function $\phi$ given by
	\begin{eqnarray}
		\phi(x)=\exp\Big(\int_{0}^{\infty}\psi(e^{-bs}x){\rm d}s\Big)=\exp\Big((1-b)\int_{0}^{x}\frac{\Psi(b)-\Psi(b+iu)}{u}{\rm d}u\Big)\frac{\Gamma(b+ix)}{\Gamma(b)},\quad x\in\rz.
	\label{eq_res_ex_local}
	\end{eqnarray}
	In the last step equation (\ref{eq_res_ex_local_1}) and the fact that $\Psi(z)=(\log \Gamma(z))',$ ${\rm Re}(z)>0,$ has been used. The characteristic function $\phi_t$ of $X_t$ is hence given by
	\begin{eqnarray*}
		\phi_t(x)
		=\frac{\phi(x)}{\phi(e^{-bt} x)}
		=\exp\Big((1-b)\int_{e^{-bt}x}^{x}\frac{\Psi(b)-\Psi(b+iu)}{u}{\rm d}u\Big)\frac{\Gamma(b+ix)}{\Gamma(b+ie^{-bt}x)},\quad x\in\rz,t\geq0.
	\end{eqnarray*} 
	If $Z$ has a gamma distribution with parameter $b$ and $1$, i.e., $Z$ has density $u\mapsto u^{b-1}e^{-u}(\Gamma(b))^{-1},$ $u>0$, with respect to Lebesgue measure on $(0,\infty)$ then $\log Z$ has a self-decomposable distribution and characteristic function $\Gamma(b+ix)/\Gamma(b),$ $x\in\rz,$ see \cite[V, Example 9.18]{steutelvanharn04}. If $b<1$ then the first factor on the right-hand side of (\ref{eq_res_ex_local}) is the characteristic function of a self-decomposable distribution (see \cite[V, Theorem 6.7]{steutelvanharn04} and the proof of Proposition \ref{prop_app_underlying_char_funct_BS}). The underlying characteristic exponent $(1-b)(\Psi(b)-\Psi(b+iu)),$ $u\in\rz,$ corresponds to the negative of a drift-free subordinator. Similarly to the convergence above, $(N_t^{(n)}/n^{e^{-bt}})_{t\geq0}$ converges in $D_{[0,\infty)}[0,\infty)$ to $(\exp(X_t))_{t\geq0}$ as $n\to\infty$.
\label{ex_res}
\end{example}

\noindent The two cases mentioned in the introduction arise from Assumption A as follows. If $\int_{[0,1]} u^{-1}\Lambda({\rm d}u)<\infty$, then the $\Lambda$-coalescent has dust and Assumption A is satisfied with $b=0$. Corollary \ref{thm_Dust} below has been proven in \cite{gaisermoehle16} and \cite{moehle21}. In both articles the blocks of the coalescent are allowed to merge simultaneously. In \cite{moehle21} the convergence of the generators has been proven and even a rate of convergence has been determined. In this article the uniform convergence of the generators is going to be proven as well, but with different techniques. In \cite{gaisermoehle16} the convergence of the corresponding semigroups has been shown, which is equivalent to the convergence of the generators on a core. We carry out the proof since parts are used to verify Theorem \ref{thm_main_convergence}. 

\begin{corollary}[dust case] Suppose $\int_{[0,1]} u^{-1}\Lambda({\rm d}u)<\infty$. Then the time-homogeneous Markov process 
	$X^{(n)}:=(X_t^{(n)})_{t\geq 0}:=(N_t^{(n)}-\log n)_{t\geq 0}$ converges in $D_{\rz}[0,\infty)$ as $n\to\infty$ to a limiting process $X=(X_t)_{t\geq 0}$ with initial value $X_0=0$ and semigroup $(T_t)_{t\geq0}$ given by 
	\begin{eqnarray}
		T_tf(x):=\me(f(X_{s+t})|X_s=x)=\me(f(x+X_t)),\qquad x\in\rz,f\in B(\rz),s,t\geq 0,
	\label{eq_res_semigroup_dust}
	\end{eqnarray}
	where $X_t$ has characteristic function $\me(\exp(ixX_t))=\exp(t\psi(x)),$ $x\in\rz,t\geq 0,$ with 
	\begin{eqnarray}
		\psi(x)=\int_{[0,1]} (e^{ix\log(1-u)}-1)u^{-2}\Lambda({\rm d}u),\qquad x\in\rz.
	\label{eq_res_char_exp_dust}
	\end{eqnarray}
	Observe that $-X$ is a pure-jump subordinator with characteristic exponent $x\mapsto\psi(-x),$ $x\in\rz$.
	\label{thm_Dust}	
\end{corollary}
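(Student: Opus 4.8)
The plan is to prove the convergence at the level of generators, as announced in the text. Write $L_n:=\{\log(k/n):1\le k\le n\}$ for the (finite) state space of $X^{(n)}=(\log N_t^{(n)}-\log n)_{t\ge0}$; since $N^{(n)}$ jumps from $k$ to $j$ at rate $q_{k,j}$, the generator of $X^{(n)}$ acts on $h\colon L_n\to\rz$ by
\begin{eqnarray*}
	A_nh(\log(k/n))=\sum_{j=1}^{k-1}q_{k,j}\bigl(h(\log(j/n))-h(\log(k/n))\bigr).
\end{eqnarray*}
The candidate limit $X$ is the L\'evy process started at $0$ with characteristic exponent $\psi$ from (\ref{eq_res_char_exp_dust}); equivalently $-X$ should be the subordinator whose L\'evy measure $\nu$ is the image of $u^{-2}\Lambda({\rm d}u)$ under $u\mapsto-\log(1-u)$. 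One checks that $\int(y\wedge1)\,\nu({\rm d}y)=\int_{[0,1]}(|\log(1-u)|\wedge1)\,u^{-2}\Lambda({\rm d}u)$ is finite: finite near $u=0$ \emph{precisely} by the dust hypothesis $\int_{[0,1]}u^{-1}\Lambda({\rm d}u)<\infty$, since $|\log(1-u)|\sim u$ there, and finite near $u=1$ because $\Lambda$ is finite. Hence $\nu$ is a genuine driftless subordinator L\'evy measure: $-X$ is a pure-jump subordinator with characteristic exponent $x\mapsto\psi(-x)$ as asserted, $X$ is a Feller process on $\rz$, and $C_c^\infty(\rz)$ is a core for its generator
\begin{eqnarray*}
	Af(x)=\int_{[0,1]}\bigl(f(x+\log(1-u))-f(x)\bigr)u^{-2}\Lambda({\rm d}u),\qquad x\in\rz,
\end{eqnarray*}
which is well defined on $C_c^\infty(\rz)$ for the same reason. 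By the standard Markov-process convergence theorem — uniform convergence of the generators on a core together with convergence of the initial states (both deterministically $0$) implies convergence in $D_\rz[0,\infty)$ — it suffices to show $\max_{x\in L_n}|A_nf(x)-Af(x)|\to0$ as $n\to\infty$ for every $f\in C_c^\infty(\rz)$.

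The decisive step is to index $A_n$ by the number $i=k-j+1\in\{2,\dots,k\}$ of coalescing blocks. Since $q_{k,k-i+1}=\binom{k}{i}\int_{[0,1]}u^{i-2}(1-u)^{k-i}\Lambda({\rm d}u)$, one obtains, writing $G_x(v):=f(x+\log(1-v))-f(x)$ (so $G_x(0)=0$) and letting $Y_{k,u}$ be binomially distributed with parameters $k$ and $u$,
\begin{eqnarray*}
	A_nf(\log(k/n))=\int_{[0,1]}u^{-2}\,\me\bigl[G_{\log(k/n)}\bigl((Y_{k,u}-1)/k\bigr)\mathbf{1}_{\{Y_{k,u}\ge2\}}\bigr]\,\Lambda({\rm d}u),
\end{eqnarray*}
which is to be compared with $Af(\log(k/n))=\int_{[0,1]}u^{-2}G_{\log(k/n)}(u)\,\Lambda({\rm d}u)$. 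Because $f$ has compact support and $\log(k/n)\le0$, both quantities vanish unless $k\ge c(f)\,n$ for some constant $c(f)>0$; hence the relevant $k$ all tend to infinity while the relevant $x=\log(k/n)$ stay in a fixed bounded interval. For fixed $u\in(0,1)$ the law of large numbers gives $(Y_{k,u}-1)/k\to u$ and $\mathbf{1}_{\{Y_{k,u}\ge2\}}\to1$, so by bounded convergence and the uniform continuity of $f$ the inner expectation converges to $G_x(u)$ as $k\to\infty$, uniformly in $x$; thus the integrand of $A_nf-Af$ tends to $0$ pointwise in $u$, uniformly in $x$.

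It remains to produce a dominating function allowing one to interchange limit and integral uniformly in $x$ and then pass to the maximum. Split $\Lambda=\Lambda|_{[0,\varepsilon)}+\Lambda|_{[\varepsilon,1]}$. On $[\varepsilon,1]$ everything is harmless: $u^{-2}\le\varepsilon^{-2}$, the integrand is bounded by $4\varepsilon^{-2}\|f\|$, and $\Lambda$ is finite, so dominated convergence applies for each fixed $\varepsilon>0$. On $[0,\varepsilon)$ one genuinely uses the dust hypothesis: from $|\log(1-v)|\le 2v$ for $v\le 1/2$ one gets $|G_x(u)|\le 2\|f'\|u$, and splitting the inner expectation over $\{Y_{k,u}\le k/2\}$ and its complement — the latter an event of probability at most $(2eu)^{k/2}$, on which only $|G_x|\le2\|f\|$ is used — bounds the integrand on $(0,\varepsilon)$, for $\varepsilon<1/(2e)$, by a constant (independent of $x$ and $k$) multiple of $u^{-1}$, which is $\Lambda|_{[0,\varepsilon)}$-integrable exactly because $\int_{[0,1]}u^{-1}\Lambda({\rm d}u)<\infty$; dominated convergence disposes of this piece as well. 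Letting $\varepsilon\downarrow0$ at the end gives the required limit, and the corollary follows. I expect this last domination to be the only genuine difficulty: near $u=0$ the measure $u^{-2}\Lambda({\rm d}u)$ has infinite total mass, so one must extract the cancelling factor $u^{-1}$ from the first-order behaviour of $G_x$, control the non-negligible event that a macroscopic fraction of the $k$ blocks merges in a single step, and keep every estimate uniform in $k$ (equivalently in $x$) so that it survives the passage to the maximum over the moving lattice $L_n$.
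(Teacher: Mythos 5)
Your proposal is correct, and although it follows the same overall skeleton as the paper (uniform convergence of the generators on a core for the limiting generator, then the Ethier--Kurtz convergence theorem), the key analytic step is carried out by a genuinely different route. The paper works on the core $\widehat{C}_2(\rz)$ of Sato's theorem, sets $h(u,x)=u^{-1}(f(x+\log(1-u))-f(x))$, and after substituting $k-j$ for $j$ rewrites the discrete generator as $c\,\me\bigl((1-(Z_k+1)^{-1})h(Z_k/k,x)\bigr)$, where $Z_k$ is a mixed binomial with sample size $k-1$ and mixing law $\mathrm{Q}({\rm d}u)=c^{-1}u^{-1}\Lambda({\rm d}u)$; uniform convergence then follows from $Z_k/k\to Z\sim\mathrm{Q}$ in distribution together with the Ranga Rao type Lemma \ref{local_lemma} for uniformly bounded, suitably equicontinuous families, and a separate argument ((\ref{eq_proof_dust_2}), (\ref{eq_proof_dust_3})) is needed for the regime $x\to-\infty$ (small $k=e^xn$), because core functions only vanish at infinity. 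You instead take $C_c^\infty(\rz)$ as core --- this is indeed a core for a L\'evy generator, but it is asserted without justification, so cite or prove that standard fact (e.g.\ via \cite[Theorem 31.5]{sato99} and an approximation argument) --- keep the $\Lambda$-integral outside, recognize the inner sum as an expectation of a binomial variable $Y_{k,u}$, and conclude by the law of large numbers, the union/Chernoff bound $(2eu)^{k/2}$ for the event that a macroscopic fraction of the $k$ blocks merges, and dominated convergence with the dominating function of order $u^{-1}$ supplied exactly by the dust hypothesis (your implicit estimate $\me[(Y_{k,u}-1)\mathbf{1}_{\{Y_{k,u}\ge2\}}]\le ku$ is what produces it); the compact support of $f$ together with the monotonicity of the paths kills the $x\to-\infty$ regime outright, since both generators vanish for $x$ below the support. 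Your argument is more elementary and self-contained (no uniform weak-convergence lemma), at the price of the explicit tail estimate and of the unproved core statement for $C_c^\infty$; the paper's representation through the tilted mixed binomial $Z_k$ and Lemma \ref{local_lemma} buys modularity, since the identical scheme is reused for the Bolthausen--Sznitman part and in the proofs of Theorems \ref{thm_main_convergence} and \ref{thm_res_fix_line}. A cosmetic point: your final ``letting $\varepsilon\downarrow0$'' is unnecessary, since for each fixed small $\varepsilon$ both pieces of the splitting already tend to $0$ by dominated convergence.
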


\noindent Note that here $b=0$, $a=\int_{[0,1]}u^{-1}\Lambda({\rm d}u)$ and the definitions (\ref{eq_results_Psi}) and (\ref{eq_res_char_exp_dust}) for $\psi$ coincide. Hence Theorem \ref{thm_main_convergence} and Corollary \ref{thm_Dust} describe the same limiting process. 

For $\Lambda=\lambda$ Assumption A is satisfied with $b=1$. The block counting process of the Bolthausen--Sznitman coalescent has been treated in \cite{KuklaMoehle18} and \cite{moehle15}. \cite{KuklaMoehle18} and \cite{moehle15} have proven that the semigroup of $(N_t^{(n)}/n^{e^{-t}})_{t\geq0}$ converges on a dense subset of $B([0,\infty))$ to the semigroup of a Feller process as $n$ tends to infinity, hence the processes converge in $D_{[0,\infty)}[0,\infty)$. Taking logarithms does not spoil the convergence. If $f\in\widehat{C}(\rz)$ then $g:=f\circ\log\in\widehat{C}([0,\infty))$, and the semigroup and hence the generator $A^{(n)}$ of the logarithm of the scaled block counting process $X^{(n)}=(X_t^{(n)})_{t\geq0}=(\log N_t^{(n)}-e^{-t}\log n)_{t\geq0}$ converge as well. We prove the convergence of $A^{(n)}$ in Section \ref{sec_proof_BS} directly. Since the scaling depends on $t$, the process $X^{(n)}$ is time-inhomogeneous, and  \cite{KuklaMoehle18} introduces the time-space process in order to transfer the question of convergence to time-homogeneous Markov processes. The time-space process is revisited in Section \ref{sec_proof_BS}. Since $\lambda=\beta(1,1)$, the following result is the particular case $b=1$ of Example \ref{ex_res}.

\begin{corollary}[Bolthausen--Sznitman case]
	Suppose $\Lambda=\lambda$. Then the time-inhomogeneous Markov process $X^{(n)}:=(X_t^{(n)})_{t\geq 0}:=(N_t^{(n)}-e^{-t}\log n)_{t\geq 0}$ converges in $D_{\rz}[0,\infty)$ as $n\to\infty$ to the time-homogeneous Markov process $X=(X_t)_{t\geq 0}$ with initial value $X_0=0$ and semigroup $(T_t)_{t\geq0}$ given by
	\begin{eqnarray*}
		T_tf(x):=\me(f(X_{s+t})|X_s=x)=\me(f(e^{-t}x+X_t)),\qquad x\in\rz,f\in B(\rz),s,t\geq 0,
	\end{eqnarray*}
	where $X_t$ has characteristic function $\phi_t(x):=\me(\exp(ixX_t))=\Gamma(1+ix)/\Gamma(1+ie^{-t}x),$ $x\in\rz,t\geq 0$.
	\label{thm_BS}
\end{corollary}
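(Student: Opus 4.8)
Since $\lambda=\beta(1,1)$, Corollary~\ref{thm_BS} is the case $b=1$ of Example~\ref{ex_res} and hence of Theorem~\ref{thm_main_convergence}; as the proof of Theorem~\ref{thm_main_convergence} in turn uses this corollary, I would establish it directly via convergence of generators. For $\Lambda=\lambda$ the block counting rates evaluate to $q_{k,j}=\binom{k}{j-1}\int_0^1 u^{k-j-1}(1-u)^{j-1}{\rm d}u=k/((k-j)(k-j+1))$ for $1\le j\le k-1$, with total outgoing rate $\sum_{j=1}^{k-1}q_{k,j}=k-1$; writing $i:=k-j$, the generator of $N^{(n)}$ is $(\mathcal G^{(n)}g)(k)=\sum_{i=1}^{k-1}\frac{k}{i(i+1)}(g(k-i)-g(k))$. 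Because the centring $e^{-t}\log n$ depends on $t$, the process $X^{(n)}$ is time-inhomogeneous, so I would work with the time-homogeneous time-space process $(t,X_t^{(n)})_{t\ge0}$ on $[0,\infty)\times\rz$; applying Dynkin's formula to the homogeneous process $(t,N_t^{(n)})$ and the function $(s,k)\mapsto F(s,\log k-e^{-s}\log n)$ identifies its generator, on a core of linear combinations of products $F(s,x)=\alpha(s)f(x)$ with compactly supported $\alpha\in C_1([0,\infty))$ and $f\in C_2(\rz)$, as
\begin{eqnarray*}
	(\widehat A^{(n)}F)(s,x)=\partial_sF(s,x)+e^{-s}(\log n)\,\partial_xF(s,x)+\sum_{i=1}^{k-1}\frac{k}{i(i+1)}\big(F(s,x+\log(1-i/k))-F(s,x)\big),
\end{eqnarray*}
where $k=k_n(s,x)$ is the integer with $\log k=x+e^{-s}\log n$ and the drift term comes from differentiating the centring. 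The decomposition $\Lambda=b\lambda+(\Lambda-b\lambda)$ underlying Theorem~\ref{thm_main_convergence} is trivial here since $\Lambda-b\lambda=0$, so only the ``$b\lambda$-part'' of the generator is present; this is precisely the part the computation below treats and that reappears in Section~\ref{sec_main}.

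The heart of the matter is the convergence $\widehat A^{(n)}F\to\widehat AF$, where $\widehat AF(s,x)=\partial_sF(s,x)+(AF(s,\cdot))(x)$ and $A$ is the generator of the limiting Ornstein--Uhlenbeck type process determined in Section~\ref{sec_limiting_process}, which for $b=1$ reads
\begin{eqnarray*}
	(Af)(x)=-xf'(x)+af'(x)+\int_{(-\infty,0)}\big(f(x+u)-f(x)+(1-e^u)f'(x)\big)\varrho({\rm d}u),
\end{eqnarray*}
with $a=1+\Psi(1)=1-\gamma$ and $\varrho({\rm d}u)=e^u(1-e^u)^{-2}{\rm d}u$ on $(-\infty,0)$. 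For a point $(s,x)$ in the state space of the time-space process one has the exact relation $e^{-s}\log n=\log k-x$ with $k=k_n(s,x)$, and $k\to\infty$ as $n\to\infty$, uniformly over $(s,x)$ in any fixed compact set. Substituting $e^{-s}\log n=\log k-x$ and cancelling the $-xf'(x)$ terms, the claim reduces to showing, with $v_i:=\log(1-i/k)$, that for every compactly supported $f\in C_2(\rz)$
\begin{eqnarray*}
	(\log k)f'(x)+\sum_{i=1}^{k-1}\frac{k}{i(i+1)}\big(f(x+v_i)-f(x)\big)\longrightarrow af'(x)+\int_{(-\infty,0)}\big(f(x+u)-f(x)+(1-e^u)f'(x)\big)\varrho({\rm d}u).
\end{eqnarray*}
Compensating the $i$-th jump by $(1-e^{v_i})f'(x)=(i/k)f'(x)$ produces the total compensator $\sum_{i=1}^{k-1}\frac{k}{i(i+1)}\cdot\frac{i}{k}=\sum_{i=1}^{k-1}\frac1{i+1}=\big(\sum_{m=1}^{k}1/m\big)-1=\log k+\gamma-1+o(1)$, which exactly absorbs the divergent drift $(\log k)f'(x)$ and leaves $af'(x)=(1-\gamma)f'(x)$ plus $o(1)$; so it remains to show that $\sum_{i=1}^{k-1}\frac{k}{i(i+1)}h(v_i)\to\int_{(-\infty,0)}h\,{\rm d}\varrho$ for $h(u):=f(x+u)-f(x)+(1-e^u)f'(x)$. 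This is a Riemann-sum statement: the substitutions $v=1-i/k$ and then $v=1-e^u$ show that $\sum_{i=1}^{k-1}\frac{k}{i(i+1)}\delta_{v_i}$ converges vaguely on $(-\infty,0)$ to $\varrho$, and the endpoint contributions are uniformly small because $h(u)=O(u^2)$ as $u\to0$ (the small jumps contribute $\sum_{i\le\varepsilon k}\frac{k}{i(i+1)}O((i/k)^2)=O(\varepsilon)$ uniformly in $k$) and $h$ is bounded on $(-\infty,0)$ with $\sum_{i>k(1-e^{-M})}\frac{k}{i(i+1)}\to\varrho((-\infty,-M))$.

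To pass from generators to paths I would check that the convergence above is uniform on compact subsets of $[0,\infty)\times\rz$, that $\widehat A$ generates a Feller semigroup on $\widehat C([0,\infty)\times\rz)$ with well-posed martingale problem (the existence of $X$ noted after Theorem~\ref{thm_main_convergence}), and that $(X^{(n)})$ satisfies a compact-containment condition on each $[0,T]$; the latter holds because the block count of the Bolthausen--Sznitman coalescent decreases only slowly, $N_t^{(n)}$ being of order $n^{e^{-t}}$, so that $X_t^{(n)}=\log(N_t^{(n)}/n^{e^{-t}})$ stays in a compact set on $[0,T]$ with high probability (this can be read off from the convergence of $(N_t^{(n)}/n^{e^{-t}})_{t\ge0}$ in \cite{moehle15}, \cite{KuklaMoehle18}, or from a direct first-moment bound for $\log N_t^{(n)}$). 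The standard convergence theorem for time-inhomogeneous Feller processes in the time-space formulation, as used in \cite{KuklaMoehle18}, then yields $X^{(n)}\Rightarrow X$ in $D_{\rz}[0,\infty)$. The semigroup (\ref{eq_results_semigroup}) and the characteristic function $\phi_t(x)=\Gamma(1+ix)/\Gamma(1+ie^{-t}x)$ follow from (\ref{eq_results_char_funct_X_t}) and (\ref{eq_res_ex_local_1}): for $b=1$ one has $\psi(x)=ix\Psi(1+ix)$, so $\frac{{\rm d}}{{\rm d}s}\log\Gamma(1+ie^{-s}x)=-ie^{-s}x\,\Psi(1+ie^{-s}x)=-\psi(e^{-s}x)$, whence $\int_0^t\psi(e^{-s}x)\,{\rm d}s=\log\Gamma(1+ix)-\log\Gamma(1+ie^{-t}x)$.

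The main obstacle is the boundary behaviour in this last step: uniform convergence $\widehat A^{(n)}F\to\widehat AF$ over the \emph{entire} time-space state space genuinely fails, since states near the absorbing state (where $N$ stays bounded, which happens only when $s$ grows with $n$ so that $k_n(s,x)\not\to\infty$) are not approximated by $\widehat A$; one must therefore use a localized convergence criterion supplemented by the compact-containment condition, exploiting that the rescaled process stays away from the boundary on bounded time intervals. A secondary technical point is making the Riemann-sum convergence uniform in $(s,x)$ on compacts, which needs the uniform-in-$k$ bounds on the small jumps ($h(v_i)=O((i/k)^2)$) and on the large jumps (where $f(x+v_i)=0$ by compact support while the corresponding rates sum to $O(1)$). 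As an aside, the corollary also follows at once by combining the convergence of $(N_t^{(n)}/n^{e^{-t}})_{t\ge0}$ from \cite{moehle15}, \cite{KuklaMoehle18} with the continuous mapping theorem for $w\mapsto\log\circ w$ (continuous at each limiting path, the limit being strictly positive); the generator route is preferred only because its estimates are reused in Section~\ref{sec_main}.
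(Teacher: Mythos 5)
Your proposal follows the same skeleton as the paper's proof: pass to the time-space process to restore time-homogeneity, compute the `generator' $A_s^{(n)}$ with the drift term $e^{-s}(\log n)f'(x)=(\log k-x)f'(x)$, compensate the jumps so that the harmonic sum $\sum_{i=1}^{k-1}\frac{k}{i(i+1)}\cdot\frac ik=H_k-1$ absorbs the divergent $\log k$ and leaves $a f'(x)=(1+\Psi(1))f'(x)$, show that the compensated sum converges to the L\'evy integral, and conclude by a generator-convergence theorem; the identification $\phi_t(x)=\Gamma(1+ix)/\Gamma(1+ie^{-t}x)$ via $\psi(x)=ix\Psi(1+ix)$ is exactly the paper's. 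Two steps are executed differently. First, your sum-to-integral step is a Riemann-sum/vague-convergence argument with an $O(u^2)$ bound near $0$; the paper instead rewrites the sum as $c(1-k^{-1})\me\big((1-(Z_k+2)^{-1})h((Z_k+1)/k,x)\big)$ with $Z_k$ mixed binomial (here uniform on $\{0,\dots,k-2\}$) and applies its Lemma \ref{local_lemma} (Ranga Rao) to an equicontinuous, uniformly bounded family $\{h(\cdot,x)\}$, which yields the convergence \emph{uniformly in $x\in\rz$} — the form that is reused verbatim for general $\Lambda$ in Section \ref{sec_main}. Second, and more substantively, you take compactly supported test functions, accept that generator convergence is only local, and propose to patch this with a localized convergence criterion plus a compact containment condition imported from \cite{moehle15}, \cite{KuklaMoehle18} (or a vague ``first-moment bound''). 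The paper avoids this entirely: taking $f\in D\subset\widehat{C}(\rz)$, one has $\lim_{x\to-\infty}\sup_k|S(k,x)|=0$, $\lim_{x\to-\infty}|I(x)|=0$ and $f'(x)\to0$, and since for $s\in[0,T]$ a bounded $k$ forces $x\le \log k-e^{-T}\log n\to-\infty$, the convergence $\sup_{(s,x)\in\widetilde E_n,\,s\in[0,T]}|\widetilde A^{(n)}f-\widetilde Af|\to0$ holds over the \emph{whole} spatial slice, so \cite[IV, Corollary 8.7]{ethierkurtz} applies with no tightness/compact-containment input at all. Your diagnosis that uniform convergence fails concerns the regime where $s$ grows with $n$; the paper sidesteps it by the restriction to $s\in[0,T]$, not by localization in space.

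Your route can be made to work, but as written it has two soft spots: the asserted core of linear combinations $\alpha(s)f(x)$ with compactly supported $\alpha$ and $f$ is not justified (the paper proves the corresponding core property for $\widetilde D$ in Proposition \ref{prop_app_gen}, and Lemma \ref{lem_lim_process_generator} supplies the core $D$ for $A$), and the compact containment either leans on the previously known functional limit theorems (acceptable, but then the argument is no longer self-contained, and indeed the logarithmic statement would already follow from those results by continuous mapping, as you note) or on an unspecified moment bound — note that the trivial bound $X_t^{(n)}\le(1-e^{-t})\log n$ shows a crude one-sided estimate is not enough, so two-sided control of $\log N_t^{(n)}-e^{-t}\log n$ on $[0,T]$ would genuinely have to be proved. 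The paper's choice of vanishing-at-infinity test functions is precisely what removes both burdens.
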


\noindent Note that here $b=1,$ $a=1+\Psi(1)$ and the underlying L\'{e}vy measure $\varrho$ has density $f$ with respect to the Lebesgue measure on $\rz\setminus\{0\}$ given by $f(u):=e^{u}(1-e^{u})^{-2}$ for $u<0$ and $f(u):=0$ for $u>0$. Example \ref{ex_res} with $b=1$ states that $\psi(x)=ix\Psi(1+ix),$ $x\in\rz,$ and that $X_t$ converges in distribution as $t\to\infty$ to the unique stationary distribution $\mu$ of $X$ with characteristic function $\phi(x)=\Gamma(1+ix),$ $x\in\rz$. Let $Z$ have an exponential distribution with parameter $1$. Then (see e.g. \cite[V, Example 9.15]{steutelvanharn04}) $\log Z$ is the negative of a Gumbel distributed random variable and $\me(e^{ix\log Z})=\Gamma(1+ix),$ $x\in\rz$. Hence $-X_t$ converges in distribution as $t\to\infty$ to the Gumbel distribution.

A convergence result for the fixation line can be stated analogously to Theorem \ref{thm_main_convergence}.

\begin{theorem}
	Suppose that $\Lambda$ satisfies Assumption A. Then the possibly time-inhomogeneous Markov process $Y^{(n)}:=(Y_t^{(n)})_{t\geq0}:=(\log L_t^{(n)}-e^{bt}\log n)_{t\geq0}$ converges in $D_{\rz}[0,\infty)$ as $n\to\infty$ to the time-homogeneous Markov process $Y=(Y_t)_{t\geq0}$ with initial value $Y_0=0$ and semigroup $(T_t)_{t\geq0}$ given by
	\begin{eqnarray}
		T_tf(y):=\me(f(Y_{s+t})|Y_s=y)=\me(f(e^{bt}y+Y_t)), \qquad y\in\rz,f\in B(\rz),s,t\geq0,
	\label{eq_res_semigroup_lim_fix_line}
	\end{eqnarray}
	where $Y_t$ has characteristic function $\chi_t$ given by
	\begin{eqnarray}
		\chi_t(y)=\exp\Big(\int_{0}^{t}\psi(-e^{bs}y){\rm d}s\Big),\qquad y\in\rz,t\geq 0,
	\label{eq_res_char_funct_Y_t}
	\end{eqnarray}
	and $\psi$ is given by (\ref{eq_results_Psi}).
\label{thm_res_fix_line}	
\end{theorem}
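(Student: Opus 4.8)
The plan is to exploit the Siegmund duality (\ref{eq_res_dual_discrete}) between the fixation line and the block counting process together with the already-established convergence in Theorem \ref{thm_main_convergence}, rather than redo a direct generator computation from scratch. First I would translate the duality into a statement about the processes $X^{(n)}$ and $Y^{(n)}$. From (\ref{eq_res_dual_discrete}), $\pr(L_t^{(m)}\geq n)=\pr(N_t^{(n)}\leq m)$; writing $m=\lfloor e^{bt}\log n\cdot\text{(something)}\rfloor$ appropriately and taking logarithms, the event $\{\log L_t^{(n)}-e^{bt}\log n\le y\}$ should correspond, up to the discretization error inherent in $\lfloor\cdot\rfloor$, to an event of the form $\{\log N_t^{(\cdot)}-e^{-bt}\log(\cdot)\ge \text{(related quantity)}\}$. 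The key identity to extract is that the one-dimensional marginal $Y_t$ has characteristic function $\chi_t(y)=\exp(\int_0^t\psi(-e^{bs}y)\,{\rm d}s)$, and indeed this is exactly $\overline{\phi_t(-y)}$-type expression obtained from $\phi_t$ by the substitution $s\mapsto t-s$ inside the integral (\ref{eq_results_char_funct_X_t}): $\int_0^t\psi(e^{-bs}x)\,{\rm d}s$ with $x$ scaled by $e^{bt}$ becomes $\int_0^t\psi(e^{b(s'-t)}e^{bt}x)\,{\rm d}s'=\int_0^t\psi(e^{bs'}x)\,{\rm d}s'$. This algebraic time-reversal on the level of characteristic exponents is the analytic heart of the Siegmund duality and should be recorded carefully.

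Second, I would establish convergence of the finite-dimensional distributions of $Y^{(n)}$ to those of $Y$. Siegmund duality for Markov processes propagates to the semigroups: (\ref{eq_res_dual_discrete}) gives $\pr(L_t^{(m)}\ge n)=\pr(N_t^{(n)}\le m)$ for every $t$, and by the Markov property at intermediate times one obtains the corresponding relation for the transition kernels, hence a duality between $(T_t)$ for $Y$ and $(T_t)$ for $X$ with respect to the pairing $(y,x)\mapsto\mathbf{1}_{\{y\ge x\}}$. Because $X^{(n)}\Rightarrow X$ in $D_\rz[0,\infty)$ and the limit $X$ has continuous (indeed absolutely continuous, being self-decomposable-type) one-dimensional and joint marginals, the distribution functions converge at every point, so the dual relation passes to the limit and pins down the law of $Y_t$ as claimed, and likewise the joint law of $(Y_{t_1},\dots,Y_{t_k})$. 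The semigroup property $\chi_{t+s}(y)=\chi_t(e^{bs}y)\chi_s(y)$, which one checks directly from (\ref{eq_res_char_funct_Y_t}) by the same change of variables, confirms that these marginals are consistent and that the limiting $Y$ is the asserted time-homogeneous Markov process with the generalized-Mehler semigroup (\ref{eq_res_semigroup_lim_fix_line}).

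Third, to upgrade convergence of finite-dimensional distributions to convergence in the Skorohod space $D_\rz[0,\infty)$ I would verify tightness of the sequence $(Y^{(n)})_n$. Here the monotonicity of the fixation line in its starting value and in time is the crucial structural input: $L^{(n)}$ has nondecreasing paths, so $Y^{(n)}_t=\log L_t^{(n)}-e^{bt}\log n$ is a difference of a nondecreasing process and a smooth deterministic curve, which makes an Aldous-type or modulus-of-continuity estimate tractable. Alternatively, and probably cleaner, tightness of $(Y^{(n)})$ transfers from tightness of $(X^{(n)})$ through the duality by a time-reversal argument on compact time intervals, using that $L^{(n)}_t$ restricted to $[0,T]$ is a.s. bounded and the hitting-time identity from \cite[Lemma 2.1]{henard15}. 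Either way, I expect the main obstacle to be precisely this passage from the duality identity for fixed-time marginals to joint convergence and tightness: the Siegmund duality (\ref{eq_res_dual_discrete}) is a clean statement only at a single time $t$, and careful use of the Markov property (or of the hitting-time formulation) is needed to obtain the multi-time statement, while simultaneously controlling the floor-function discretization error $\lfloor e^{bt}\log n\rfloor$ versus $e^{bt}\log n$, which is $O(1)$ and hence negligible after rescaling but must be handled explicitly. The rest — identifying the characteristic function $\chi_t$ via the change of variables, and recognizing $(T_t)$ as a Feller generalized Mehler semigroup so that $Y$ exists with càdlàg paths — is routine given the machinery already set up for Theorem \ref{thm_main_convergence}.
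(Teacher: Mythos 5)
Your first step is exactly the paper's: combine the Siegmund duality (\ref{eq_res_dual_discrete}) with Theorem \ref{thm_main_convergence} to get $Y_t^{(n)}\Rightarrow Y_t \overset{\mathrm{d}}{=} -e^{bt}X_t$ for each fixed $t$, and the change of variables identifying $\chi_t$ is correct. The genuine gap is in your second and third steps, i.e.\ the upgrade from one-dimensional convergence to convergence in $D_{\rz}[0,\infty)$. Your plan is finite-dimensional distributions via ``propagating the duality through the Markov property'' plus a separate tightness proof, but neither half is actually carried out: Siegmund duality is a statement about a single time marginal, and your fallback of ``transferring tightness from $X^{(n)}$ through the duality by a time-reversal argument'' has no clear content, since the duality controls distribution functions at fixed times and says nothing about path moduli. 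The Aldous-type route via monotonicity of $L^{(n)}$ would require uniform-in-$n$ control of increments after stopping times, which again is not sketched. In addition, your claim that $X$ has absolutely continuous marginals is false in general: if $\int_{[0,1]}u^{-2}\Lambda({\rm d}u)<\infty$ (possible in the dust case $b=0$), $-X$ is compound Poisson and $X_t$ has an atom at $0$; the paper handles this by distinguishing the case $\int u^{-2}\Lambda({\rm d}u)=\infty$ (continuous law) from the dust case, where one works only at continuity points.

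What the paper actually does to close this gap is worth noting, because it avoids both f.d.d.\ bookkeeping and tightness entirely. One passes to the time-space processes $\widetilde{Y}^{(n)}=(t,Y_t^{(n)})$ and $\widetilde{Y}=(t,Y_t)$ and observes that the semigroup of $\widetilde{Y}^{(n)}$ at a state $(s,y)$ is $\widetilde{T}_t^{(n)}f(s,y)=\me\bigl(f(s+t,e^{bt}y+Y_t^{(k)})\bigr)$ with $k=e^y n^{e^{bs}}\geq n$; hence the fixed-time convergence $Y_t^{(k)}\Rightarrow Y_t$ (with initial state $k\to\infty$) is precisely what is needed. Using that $\{h(e^{bt}y+\cdot)\,|\,y\in\rz\}$ is uniformly bounded and equicontinuous for $h\in\widehat{C}(\rz)$, Ranga Rao's theorem \cite[Theorem 3.1]{rangarao} upgrades this to convergence of the semigroups uniformly over $\widetilde{E}_n$, first for product functions $g(s)h(y)$ and then, by Stone--Weierstrass, on a dense subset of $B(\widetilde{E})$. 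Then \cite[IV, Theorem 2.11]{ethierkurtz} yields $\widetilde{Y}^{(n)}\to\widetilde{Y}$ in $D_{\widetilde{E}}[0,\infty)$, hence $Y^{(n)}\to Y$, with no separate tightness argument. (The paper also gives a second, independent proof by uniform convergence of generators, decomposing $\Lambda=b\lambda+\Lambda_D$ as in Theorem \ref{thm_main_convergence}.) If you want to salvage your outline, replacing your steps two and three by this semigroup argument is the missing idea.
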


\begin{remark}
	\begin{enumerate}
	\item The process defined by (\ref{eq_res_semigroup_lim_fix_line}) and (\ref{eq_res_char_funct_Y_t}) is an Ornstein--Uhlenbeck type process with underlying characteristic exponent $y\mapsto \psi(-y),$ $y\in\rz.$ The semigroup defined by (\ref{eq_res_semigroup_lim_fix_line}) belongs to the class of generalized Mehler semigroups, since $\chi_{t+s}(y)=\chi_t(e^{bs}y)\chi_s(y),$ $y\in\rz,$ for $s,t\geq0$ \cite{bogachevroecknerschmuland96}.
	
	\item Let the random variable $S_t$ have characteristic function $\phi_t$, given by (\ref{eq_results_char_funct_X_t}), for $t\geq0$. Conditional on $X_s=x$, $X_{t+s}$ is distributed as $e^{-bt}x+S_t$ for all $x\in\rz$. Note that $Y_t\overset{\text{d}}{=}-e^{bt}X_t\overset{\text{d}}{=}-e^{bt}S_t$ and that conditional on $Y_s=y$, $Y_{t+s}$ is distributed as $e^{bt}y-e^{bt}S_t$ such that
	\begin{eqnarray*}
		\pr(e^{Y_{t+s}}\geq x|e^{Y_s}=y)=\pr(y^{e^{bt}}e^{-e^{bt}S_t}\geq x)=\pr(x^{e^{-bt}}e^{S_t}\leq y)=\pr(e^{X_{t+s}}\leq y|e^{X_s}=x)
	\end{eqnarray*}
	for all $x,y,s,t\geq 0$, i.e., $e^Y$ is Siegmund-dual to $e^X$ (see \cite{siegmund76}).
	\item The Bolthausen--Sznitman case $\Lambda=\lambda$ is stated in \cite[Theorem 3.1 b)]{KuklaMoehle18} in non-logarithmic form. The fixation line in the Bolthausen--Sznitman coalescent is a continuous-time discrete state space branching process in which the offspring distribution has probability generating function $f(s)=s+(1-s)\log(1-s),$ $s\in[0,1].$ 
	The limiting process described in Theorem \ref{thm_res_fix_line} is the logarithm of Neveu's continuous-state branching process. By Corollary \ref{thm_BS}, the characteristic functions $\chi_t$ of the marginal distributions are given by (see \cite[Eq. (19)]{KuklaMoehle18})
	\begin{eqnarray*}
		\chi_t(y)=\phi_t(-e^{t}y)=\Gamma(1-ie^{bt}y)/\Gamma(1-iy), \qquad y\in\rz,t\geq0.
	\end{eqnarray*}
	\end{enumerate}
\end{remark}

\subsection{The limiting process} \label{sec_limiting_process}

Standard computations (see \cite[Lemma 17.1]{sato99}) show that $\phi_t$, given by (\ref{eq_results_char_funct_X_t}), is the characteristic function of an infinitely divisible distribution for each $t\geq0$ without Gaussian component and L\'{e}vy measure $\varrho_t$ given by
\begin{eqnarray*}
	\varrho_t(A)=\int_{\rz\setminus\{0\}}\int_{0}^{t}1_A(e^{-bs}u){\rm d}s\varrho({\rm d}u),\qquad A\in\mathcal{B},t\geq0.
\end{eqnarray*}
Sato and Yamazato \cite[Theorem 3.1]{satoyamazato84} provide a formula for the generator corresponding to the semigroup $(T_t)_{t\geq0}$ given by (\ref{eq_results_semigroup}). 

\begin{lemma}
	Suppose that $\Lambda$ satisfies Assumption A. (Let $\psi$ be given by (\ref{eq_results_Psi}), $\phi_t$ be defined by (\ref{eq_results_char_funct_X_t}) and let the random variable $X_t$ have characteristic function $\phi_t$ for each for $t\geq0$.) The family of operators $(T_t)_{t\geq 0}$ defined by (\ref{eq_results_semigroup}) is a Feller semigroup. Let $D$ denote the space of twice differentiable functions $f:\rz\to\rz$ such that $f,f',f''\in\widehat{C}(\rz)$ and such that the map $x\mapsto xf'(x),$ $x\in\rz,$ belongs to $\widehat{C}(\rz)$. Then $D$ is a core for the generator $A$ corresponding to $(T_t)_{t\geq0}$ and
	\begin{eqnarray}
		Af(x)=f'(x)(a-bx)+\int_{[0,1]}(f(x+\log(1-u))-f(x)+uf'(x))u^{-2}\Lambda({\rm d}u)
	\label{eq_lim_process_generator}
	\end{eqnarray}
	for $x\in\rz$ and $f\in D$, where $a$ is given by (\ref{eq_results_a}).
\label{lem_lim_process_generator}
\end{lemma}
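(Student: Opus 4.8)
The plan is to identify the semigroup $(T_t)_{t\ge0}$ of (\ref{eq_results_semigroup}) with the transition semigroup of an Ornstein--Uhlenbeck type process in the sense of Sato and Yamazato \cite{satoyamazato84} and then to read off and rewrite their generator formula. First I would observe that, by (\ref{eq_results_char_funct_X_t}), $X_t$ has the same law as $\int_0^t e^{-bs}\,{\rm d}Z_s$, where $Z=(Z_t)_{t\ge0}$ is the L\'evy process with $\me(e^{ixZ_t})=e^{t\psi(x)}$; this process exists because, as recalled before the theorem, $\varrho$ in (\ref{eq_res_varrho}) is a L\'evy measure and $\psi$ is a genuine characteristic exponent. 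Consequently (\ref{eq_results_semigroup}) is the transition semigroup of the process $X$ solving the Langevin equation ${\rm d}X_t=-bX_t\,{\rm d}t+{\rm d}Z_t$ with $X_0=0$ (that $(T_t)_{t\ge0}$ is indeed a semigroup, in fact a generalized Mehler semigroup, was already recorded after Theorem \ref{thm_main_convergence}). The substitution $v=\log(1-u)$ carried out after (\ref{eq_results_Psi}), which also defines $\varrho$ in (\ref{eq_res_varrho}), exhibits $Z$ as the L\'evy process with triplet $(0,\varrho,a)$ relative to the bounded truncation function $h(v):=e^{v}-1$, i.e.\ $\psi(x)=iax+\int_{(-\infty,0)}(e^{ixv}-1-ixh(v))\varrho({\rm d}v)$ with $a$ given by (\ref{eq_results_a}).

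Next, for $b>0$ the mean-reversion coefficient is positive, so Sato and Yamazato \cite[Theorem 3.1]{satoyamazato84} applies: it gives that $(T_t)_{t\ge0}$ is a Feller semigroup (the Feller property was also noted after Theorem \ref{thm_main_convergence} via the theory of generalized Mehler semigroups), that $D$ is a core for its generator $A$, and an expression $Af(x)=(\gamma_0-bx)f'(x)+\int_{(-\infty,0)}(f(x+v)-f(x)-h_0(v)f'(x))\varrho({\rm d}v)$ for $f\in D$, with $h_0$ a standard truncation function such as $v\mapsto v1_{\{|v|\le1\}}$ and $\gamma_0$ the corresponding drift. (If that theorem is phrased with a smaller core such as $C_c^2(\rz)$, then $D$ is a core as well, since $C_c^2(\rz)\subseteq D\subseteq\mathcal D(A)$.) Because $(e^{v}-1)-v1_{\{|v|\le1\}}=O(v^2)$ as $v\to0$ and stays bounded as $v\to-\infty$, where $\varrho$ has finite mass, the difference $h-h_0$ is $\varrho$-integrable; hence one may pass to the truncation $h$ at the price of replacing $\gamma_0$ by $\gamma_0+\int(h-h_0)\,{\rm d}\varrho$, which equals $a$ because $a$ is precisely the drift of $Z$ relative to $h$. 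The formula becomes $Af(x)=(a-bx)f'(x)+\int_{(-\infty,0)}(f(x+v)-f(x)+(1-e^{v})f'(x))\varrho({\rm d}v)$, and undoing $v=\log(1-u)$ — under which $\varrho({\rm d}v)$ turns into $u^{-2}\Lambda({\rm d}u)$ and $1-e^{v}$ into $u$ — yields exactly (\ref{eq_lim_process_generator}).

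The borderline case $b=0$ of Assumption A, which is not covered by \cite{satoyamazato84}, I would handle directly: then (\ref{eq_results_semigroup}) is the convolution semigroup of the L\'evy process $Z$ with triplet $(0,\varrho,a)$ relative to $h$, which is Feller, $C_c^\infty(\rz)\subseteq D$ is a core, and the classical L\'evy--It\^o generator formula (see e.g.\ \cite{sato99}), after the same change of truncation and substitution, again gives (\ref{eq_lim_process_generator}) with $b=0$; this is consistent with Corollary \ref{thm_Dust}.

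I expect the only genuine work to be bookkeeping: matching the sign and centering conventions of \cite{satoyamazato84} with (\ref{eq_results_Psi})--(\ref{eq_results_semigroup}), and checking the single integrability estimate that licenses the change of truncation. The conceptual point to bear in mind is that $\int_{\{|v|\le1\}}|v|\,\varrho({\rm d}v)$ is in general infinite — because $\Lambda$ behaves like $b\lambda$ near $0$ when $b>0$ — so the compensator in (\ref{eq_lim_process_generator}) cannot simply be dropped; the specific compensator $uf'(x)$, equivalently the truncation $h(v)=e^{v}-1$, is the one with respect to which the drift coefficient is the constant $a$ of (\ref{eq_results_a}), and the translation from the Sato--Yamazato formula has to be carried out relative to that truncation.
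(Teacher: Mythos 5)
Your proposal is correct and follows essentially the same route as the paper: after the substitution $v=\log(1-u)$ one recognizes the operator as being of the form (1.1) in Sato and Yamazato and invokes their Theorem 3.1, the only extra content in your write-up being the explicit change-of-truncation bookkeeping (absorbing $\int(h-h_0)\,{\rm d}\varrho$ into the drift to produce $a$) and the separate treatment of $b=0$. Note that the paper resolves your parenthetical worry directly: in \cite{satoyamazato84} the space $D$ is their $F_1$, Step 3 of the proof of Theorem 3.1 gives the generator formula on all of $F_1$ (not just $C_c^2$), and Step 5 gives the core property, so no extension argument from $C_c^2$ to $D$ is needed.
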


\begin{proof}
	Substituting $g:(0,1)\to\rz,$ $g(u):=\log(1-u),$ $u\in(0,1)$ shows that (\ref{eq_lim_process_generator}) is an integro-differential operator of the form (1.1) of Sato and Yamazato \cite{satoyamazato84} with dimension $d=1$. In \cite{satoyamazato84}, operators of this form are initially considered as acting on the space $C_c^2$ of twice differentiable functions with compact support (see the explanations after Eq.~(1.2) in \cite{satoyamazato84}), but Step 3 of the proof of \cite[Theorem 3.1]{satoyamazato84} shows that (\ref{eq_lim_process_generator}) even holds for functions $f\in D$ ($\supset C_c^2$). Note that the space $D$ is denoted by $F_1$ in \cite{satoyamazato84}. The fact that $D$ is a core for $A$ is only a different phrasing of the claim in Step 5 of the proof of \cite[Theorem 3.1]{satoyamazato84}.
\hfill$\Box$\end{proof}

\begin{remark} 
	The limiting process in Theorem \ref{thm_main_convergence} arises as the solution of a certain stochastic differential equation. Let the L\'{e}vy process $L=(L_t)_{t\geq0}$ with characteristic functions $\me(e^{ixL_t})=e^{t\psi(x)},$ $x\in\rz,t\geq0,$ be adapted to the filtration $(\mathcal{F}_t)_{t\geq0}$ which satisfies the usual hypotheses such that $L_{t+s}-L_{s}$ is independent of $\mathcal{F}_s$ for all $s,t\geq0$. In this remark $\psi$ is allowed to be the characteristic exponent of an arbitrary infinitely divisible distribution on $\rz$ and $b>0$ is fixed. The Langevin equation with L\'{e}vy noise instead of a Brownian motion 
	\begin{eqnarray}
		{\rm d}X_t=-bX_{t}{\rm d}t+{\rm d}L_t,\qquad t\geq 0,
	\label{eq_lim_langevin}
	\end{eqnarray}
	with initial value $X_0=0$ has an unique $(\mathcal{F}_t)_{t\geq0}$-adapted solution $X=(X_t)_{t\geq0}$ with c\`{a}dl\`ag paths. The solution of (\ref{eq_lim_langevin}) or the corresponding semigroup are hence sometimes called \textit{Ornstein--Uhlenbeck type} or \textit{generalized Ornstein--Uhlenbeck} process or semigroup. It holds that
	\begin{eqnarray}
		X_t= \int_{0}^{t}e^{-b(t-s)}{\rm d}L_s,\qquad t\geq 0.
	\label{eq_lim_stoch_int}
	\end{eqnarray}
	Various constructions for the integral (\ref{eq_lim_stoch_int}) are possible. In Applebaum \cite[Sections 6.3 and 6.2]{applebaum04} the stochastic integral is the It\^{o}-integral with respect to semimartingales. Wolfe \cite{wolfe82} constructed the integral as a random Bochner integral, which exists in the sense of convergence in probability. Jurek and Vervaat \cite{jurekvervaat83} constructed the stochastic integral as a pathwise Laplace-Stieltjes integral (and using integration by parts). 
	The process $X$ is a stochastically continuous Markov process and the corresponding semigroup is given by (\ref{eq_results_semigroup}), where the characteristic functions of $X_t$ are given by (\ref{eq_results_char_funct_X_t}) with underlying infinitely divisible characteristic exponent $\psi$ for $t\geq 0$.  

	Suppose that $b>0$ and that the L\'{e}vy measure $\varrho$ of the characteristic exponent $\psi$ satisfies
	\begin{eqnarray}
		\int_{\{|x|>1\}}\log(1+|u|)\varrho({\rm d}u)<\infty.
	\label{eq_lim_integral_cond}
	\end{eqnarray}
	According to \cite[Theorems 4.1 and 4.2]{satoyamazato84}, $X_t$ converges in distribution as $t\to\infty$ to the unique stationary distribution $\mu$ of $X$. The distribution $\mu$ is self-decomposable and conversely every self-decomposable distribution can be obtained as the stationary distribution of an Ornstein--Uhlenbeck type process. If (\ref{eq_lim_integral_cond}) does not hold, then there exists no stationary distribution. The following Lemma is an application to our setting.
\end{remark}

\begin{lemma}
	Suppose that $\Lambda$ satisfies Assumption A. Let $X=(X_t)_{t\geq0}$ be as in Theorem \ref{thm_main_convergence}. If further $\int_{(\varepsilon,1)} \log \log (1-u)^{-1}\Lambda({\rm d}u)<\infty$ for some $1-e^{-1}<\varepsilon<1$ then $X_t$ converges in distribution as $t\to\infty$ to the unique stationary distribution $\mu$ of $X$. The distribution $\mu$ is self-decomposable with characteristic function $\phi$ given by
	\begin{eqnarray}
		\phi(x)=\exp\Big(\int_{0}^{\infty}\psi(e^{-bs}x){\rm d}s\Big),\qquad x\in\rz.
	\end{eqnarray}
	The characteristic function $\phi_t$ of $X_t$ satisfies $\phi_t(x)=\phi(x)/\phi(e^{-bt}x),$ $x\in\rz$. 
	
	If $\int_{(\varepsilon,1)} \log \log (1-u)^{-1}(u^{-2})\Lambda({\rm d}u)=\infty$ for $0<\varepsilon<1$, then, for every $l$,
	\begin{eqnarray}
		\lim_{t\to\infty}\sup_{x\in\rz}\sup_{y\in\rz}\pr(|e^{-bt}x+X_t-y|\leq l)=0.
	\end{eqnarray}
	The process has no stationary distribution.
\label{lem_lim_process_stat_dist}
\end{lemma}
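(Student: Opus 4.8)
The lemma is an application to our setting of Theorems~4.1 and~4.2 of \cite{satoyamazato84} recalled in the remark above, so the only real work is to rewrite the logarithmic moment condition (\ref{eq_lim_integral_cond}) on the L\'evy measure $\varrho$ of $\psi$ in terms of $\Lambda$. By (\ref{eq_res_varrho}) the measure $\varrho$ is the image of $v\mapsto v^{-2}\Lambda({\rm d}v)$ on $(0,1)$ under $g(v)=\log(1-v)$, and $|g(v)|>1$ is equivalent to $v>1-e^{-1}$, so the substitution $x=\log(1-v)$ yields
\begin{eqnarray*}
	\int_{\{|x|>1\}}\log(1+|x|)\,\varrho({\rm d}x)=\int_{(1-e^{-1},1)}\log\big(1+\log(1-v)^{-1}\big)\,v^{-2}\Lambda({\rm d}v).
\end{eqnarray*}
On $(1-e^{-1},1)$ the factor $v^{-2}$ lies between two positive constants, $\log(1+\log(1-v)^{-1})$ and $\log\log(1-v)^{-1}$ differ by the bounded term $\log(1+1/\log(1-v)^{-1})$, and shifting the lower limit of integration changes the integral only by its value over a compact subinterval on which the integrand is bounded and $\Lambda$ finite. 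Hence (\ref{eq_lim_integral_cond}) is equivalent to the first hypothesis $\int_{(\varepsilon,1)}\log\log(1-v)^{-1}\Lambda({\rm d}v)<\infty$; likewise, since the $v^{-2}$ factor is harmless near $v=1$ where all the relevant mass sits, the failure of (\ref{eq_lim_integral_cond}) is equivalent to the second hypothesis $\int_{(\varepsilon,1)}\log\log(1-v)^{-1}v^{-2}\Lambda({\rm d}v)=\infty$. Throughout I work in the contractive case $b>0$, where the Ornstein--Uhlenbeck structure underlying (\ref{eq_results_semigroup}) is present, as in the remark above.

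Granting (\ref{eq_lim_integral_cond}), Theorems~4.1 and~4.2 of \cite{satoyamazato84} immediately give that $X_t$ converges in distribution as $t\to\infty$ to the unique stationary distribution $\mu$ of $X$ and that $\mu$ is self-decomposable. To identify its characteristic function, observe that under (\ref{eq_lim_integral_cond}) the integral $\int_{0}^{\infty}\psi(e^{-bs}x)\,{\rm d}s$ converges for every $x\in\rz$ (this is part of the Sato--Yamazato theory quoted in the remark, cf. also \cite{sato99}); consequently $\phi_t(x)=\exp(\int_{0}^{t}\psi(e^{-bs}x)\,{\rm d}s)$ converges pointwise as $t\to\infty$ to $\phi(x):=\exp(\int_{0}^{\infty}\psi(e^{-bs}x)\,{\rm d}s)$, and since $X_t$ also converges in distribution to $\mu$, the function $\phi$ is the characteristic function of $\mu$. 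The identity $\phi_t(x)=\phi(x)/\phi(e^{-bt}x)$ then follows by writing $\int_{0}^{\infty}=\int_{0}^{t}+\int_{t}^{\infty}$ and substituting $s=t+r$ in the tail, which turns $\int_{t}^{\infty}\psi(e^{-bs}x)\,{\rm d}s$ into $\int_{0}^{\infty}\psi(e^{-br}(e^{-bt}x))\,{\rm d}r=\log\phi(e^{-bt}x)$.

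When (\ref{eq_lim_integral_cond}) fails, \cite{satoyamazato84} already yields that $X$ has no stationary distribution, and it remains to establish the uniform spreading-out bound. Since $\pr(|e^{-bt}x+X_t-y|\le l)\le Q(t,l):=\sup_{z\in\rz}\pr(|X_t-z|\le l)$ uniformly in $x,y\in\rz$, it suffices to prove $Q(t,l)\to0$ for each $l>0$. From ${\rm Re}\,\psi\le0$ one has $|\phi_t(x)|=\exp\big(\int_{0}^{t}{\rm Re}\,\psi(e^{-bs}x)\,{\rm d}s\big)$, and a Tonelli argument based on the elementary identity $\int_{0}^{\infty}(1-\cos(e^{-bs}\theta))\,{\rm d}s=\frac1b\int_{0}^{|\theta|}\frac{1-\cos w}{w}\,{\rm d}w$ (of order $\theta^2$ for small $|\theta|$ and of order $\log|\theta|$ for large $|\theta|$) shows that $-\int_{0}^{\infty}{\rm Re}\,\psi(e^{-bs}x)\,{\rm d}s$ is comparable to $\int_{[0,1]}\log(1+|x\log(1-v)|)\,v^{-2}\Lambda({\rm d}v)$, which for each fixed $x\neq0$ is infinite precisely because (\ref{eq_lim_integral_cond}) fails; thus $|\phi_t(x)|\to0$ pointwise on $\rz\setminus\{0\}$. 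An Esseen-type concentration inequality bounds $Q(t,l)$ by a constant multiple of $l\int_{-1/l}^{1/l}|\phi_t(x)|\,{\rm d}x$, so dominated convergence (using $|\phi_t|\le1$) gives $Q(t,l)\to0$, which is the claimed limit and in particular re-proves the non-existence of a stationary law. The main obstacle is the careful bookkeeping in these two translations of (\ref{eq_lim_integral_cond}): tracking the change of variables through (\ref{eq_res_varrho}), the omnipresent but harmless $v^{-2}$ factor, the passage from $\log(1+\log(1-v)^{-1})$ to $\log\log(1-v)^{-1}$ near $v=1$, and the quantitative comparison for the real part; everything downstream is a citation of \cite{satoyamazato84} (or \cite{sato99}) together with the standard concentration estimate.
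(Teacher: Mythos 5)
Your proposal is correct and takes essentially the paper's route: the paper offers no separate argument for this lemma, treating it as a direct application of Theorems 4.1 and 4.2 of \cite{satoyamazato84} (quoted in the preceding remark, which also tacitly supplies $b>0$) once the logarithmic moment condition (\ref{eq_lim_integral_cond}) on $\varrho$ is translated into the stated conditions on $\Lambda$ via the substitution underlying (\ref{eq_res_varrho}), which is exactly the bookkeeping you carry out. The only deviation is that you re-derive the spreading-out bound from scratch via the Esseen concentration inequality and $|\phi_t(x)|\to 0$, whereas the paper reads that statement (and the non-existence of a stationary law) directly off Theorem 4.2 of \cite{satoyamazato84}; your extra argument is sound but not needed for the paper's purposes.
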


\subsection{Proof of Corollary \ref{thm_Dust}} \label{sec_proof_dust}

In this section $\Lambda$ satisfies the dust condition $\int_{[0,1]}u^{-1}\Lambda({\rm d}u)<\infty$. Let $E_n:=\{x\in\rz|e^{x}n\in[n]\}$ denote the state space of $X^{(n)}=(X_t^{(n)})_{t\geq 0}=(\log N_t^{(n)}-\log n)_{t\geq 0}$ for each $n\in\nz$. Define $k:=k(x,n):=e^{x}n\in [n]$ for $x\in E_n$ and $n\in\nz$ such that the generator $A^{(n)}$ of $X^{(n)}$ can be represented as
\begin{eqnarray}
	A^{(n)}f(x)=\sum_{j=1}^{k-1}(f(x+\log\tfrac{j}{k})-f(x))q_{k,j},\qquad x\in E_n,f\in\widehat{C}(\rz),n\in\nz.
\label{eq_proof_dust_gen_discrete}
\end{eqnarray}
The process $X=(X_t)_{t\geq 0}$ defined by (\ref{eq_res_semigroup_dust}) and (\ref{eq_res_char_exp_dust}) is a Feller process in $\widehat{C}(\rz)$. Let $A$ denote the generator. From \cite[Theorem 31.5]{sato99} it follows that the space $\widehat{C}_2(\rz)$ of twice differentiable functions $f\in C_2(\rz)$ with $f,f',f''\in\widehat{C}(\rz)$ is a core for $A$ and
\begin{eqnarray}
	Af(x)=\int_{[0,1]}(f(x+\log(1-u))-f(x))u^{-2}\Lambda({\rm d}u),\qquad x\in\rz,f\in \widehat{C}_2(\rz).
\label{eq_proof_dust_gen_limit}
\end{eqnarray}
The idea to prove the uniform convergence of the generators is the following: write the jump rates as values of a distribution depending on $k$ (with some minor rectifications) whose limiting behavior as $k\to\infty$ can be determined. The generator can then be written as the mean of a random variable and classical weak convergence results can be applied. 

\begin{proof} [of Corollary \ref{thm_Dust}]
	Let $f \in \widehat{C}_2(\rz)$. Define $h : [0,1] \times \rz \to \rz$ via $h(u,x):=u^{-1}(f(x+\log(1-u))-f(x)),$ $u\in(0,1),$ $h(0,x):=\lim_{u\searrow0}h(u,x)=-f'(x)$ and $h(1,x):=\lim_{u\nearrow1}h(u,x)=-f(x)$ for $x\in\rz$. Differentiating $s \mapsto f(x+\log(1-us)),$ $s\in(0,1),$ leads to
	\begin{eqnarray*}
		f(x+\log(1-u))-f(x)
		=-u\int_{0}^{1}\frac{f'(x+\log(1-us))}{1-us}{\rm d}s, \qquad u\in[0,1),x\in\rz,
	\end{eqnarray*}
	such that
	\begin{eqnarray*}
		h(u,x)=-\int_{0}^{1}\frac{f'(x+\log(1-us))}{1-us}{\rm d}s, \qquad u\in[0,1),x\in\rz,
	\end{eqnarray*}
	and $h$ stays bounded even as $u$ tends to $0$. Define
	\begin{eqnarray}
		S(k,x):=\sum_{j=1}^{k-1}(f(x+\log\tfrac{j}{k})-f(x))q_{k,j}, \quad I(x):=\int_{[0,1]}h(u,x)u^{-1}\Lambda({\rm d}u), \qquad k\in\nz,x\in\rz,
	\label{eq_proof_dust_sum_and_integral}
	\end{eqnarray} 
	such that $A^{(n)}f(x)=S(k,x)$ for $x\in E_n$ and $n\in\nz$ and $I(x)=Af(x)$ for $x\in\rz$. Substituting $k-j$ for $j$ and the definition of $h$ yield
	\begin{eqnarray*}
		S(k,x) 
		&=&\sum_{j=1}^{k-1}(f(x+\log(1-\tfrac{j}{k}))-f(x))q_{k,k-j}
		\\&=&\sum_{j=1}^{k-1}h(\tfrac{j}{k},x)\frac{j}{k}\binom{k}{j+1}\int_{[0,1]}u^{j-1}(1-u)^{k-j-1}\Lambda({\rm d}u)
		\\&=&\sum_{j=0}^{k-1}h(\tfrac{j}{k},x)\frac{j}{j+1}\binom{k-1}{j}\int_{[0,1]}u^{j-1}(1-u)^{k-j-1}\Lambda({\rm d}u).
	\end{eqnarray*}
	Set $c:=\int_{[0,1]} u^{-1}\Lambda({\rm d}u)>0$ and define the probability measure $\mathrm{Q}$ on $([0,1],\mathcal{B}\cap[0,1])$ via $\mathrm{Q}(A):=c^{-1}\int_Au^{-1}\Lambda({\rm d}u),$ $A\in\mathcal{B}\cap[0,1].$ Let the random variables $Z_k,$ $k\in\nz,$ have distribution given by
	\begin{eqnarray*}
		\pr(Z_k=j)=\binom{k-1}{j}\int_{[0,1]}u^j(1-u)^{k-1-j}\mathrm{Q}({\rm d}u),\qquad j\in\{0,\ldots,k-1\},
	\end{eqnarray*}
	i.e., $Z_k$ has a mixed binomial distribution with sample size $k-1$ and random success rate $\mathrm{Q}$. Let the random variable $Z$ have distribution $\mathrm{Q}$. Then
	\begin{eqnarray*}
		S(k,x)=c\me((1-(Z_k+1)^{-1})h(Z_k/k,x)),\qquad k\in\nz,x\in\rz, 
	\end{eqnarray*}
	and $I(x)=c\me(h(Z,x)),$ $x\in\rz.$ It is straightforward to check that $Z_k/k\to Z$ in distribution as $k\to\infty$, e.g., by verifying the convergence of the cumulative distribution functions (cdf) on the set of continuity points of the cdf of $Z$. In particular, $\lim_{k\to\infty}\pr(Z_k\leq C)=Q(0)=0$ for every $C>0$ such that $\lim_{k\to\infty}\me((Z_k+1)^{-1})=0$. Since $h$ is bounded and $f,f'\in\widehat{C}(\rz)$ are uniformly continuous, the family of functions $\{h(\cdot,x)|x\in\rz\}$ is equicontinuous on $[\delta,1-\delta]$ for every $0<\delta<1/2$ and uniformly bounded on $[0,1]$. From Lemma \ref{local_lemma} it follows that $\me(h(Z_k/k,x))\to\me(h(Z,x))$ uniformly in $x\in\rz$ as $k\to\infty$, thus
	\begin{eqnarray}
		\lim_{k\to\infty}\sup_{x\in\rz}|S(k,x)-I(x)|=0.
	\label{eq_proof_dust_1}
	\end{eqnarray}  
	From $\lim_{x\to-\infty}h(Z,x)=0$ a.s., the fact that $h$ is bounded and the dominated convergence theorem it follows that
	\begin{eqnarray}
		\lim_{x\to-\infty}|I(x)|=c\lim_{x\to-\infty}|\me(h(Z,x))|=0.
	\label{eq_proof_dust_2}
	\end{eqnarray}
	Since $f\in\widehat{C}(\rz)$, $\lim_{x \to -\infty}S(k,x)=0$ for any $k\in\nz$. Due to (\ref{eq_proof_dust_1}) and (\ref{eq_proof_dust_2}),
	\begin{eqnarray}
		\lim_{x\to-\infty}\sup_{k\in\nz}|S(k,x)|=0.
	\label{eq_proof_dust_3}
	\end{eqnarray}
	As $n\to\infty$, $k=k(x,n)=e^xn\to\infty$ or $x\to-\infty$. For example, for $n\in\nz$ and $x\in E_n$, either $k\geq n^{1/2}$ or $x<-\tfrac{1}{2}\log n$. Distinguishing the two cases leads to
	\begin{eqnarray}
		&&\lim_{n\to\infty}\sup_{x\in E_n}|A^{(n)}f(x)-Af(x)|\nonumber
		\\&&~~~~~~~~~~\leq\lim_{k\to\infty}\sup_{x\in\rz}|S(k,x)-I(x)|+\lim_{x\to-\infty}\sup_{k\in\nz}|S(k,x)|+\lim_{x\to-\infty}|I(x)|=0.
	\label{eq_proof_dust_gen_conv}	
	\end{eqnarray}
	By \cite[I, Theorem 6.1 and IV, Theorem 2.5]{ethierkurtz}, $X^{(n)}\to X$ in $D_{\rz}[0,\infty)$ as $n\to\infty$.
\hfill$\Box$\end{proof}

\begin{remark} 
	\mbox{}
	\begin{enumerate}
		\item The generator $A^{(n)}$ converges even if $\Lambda(\{1\})>0$. In this case the atom at $1$ can be split off from $\Lambda$ such that $q_{k,j}=\binom{k}{j-1}\int_{[0,1)}u^{k-j-1}(1-u)^{j-1}\Lambda|_{[0,1)}({\rm d}u)+\Lambda(\{1\})1_{\{1\}}(j),$ $j\in\{1,\ldots,k-1\},k\geq2,$ where the first summand are the jump rates of the block counting process corresponding to the restriction $\Lambda|_{[0,1)}$ of $\Lambda$ to $[0,1)$, i.e., a measure with no atom at $1$. Thus, 
		\begin{eqnarray*}
			A^{(n)}f(x)=S(k,x)+(f(\log n^{-1})-f(x))\Lambda(\{1\}),\qquad x\in E_n,f\in\widehat{C}(\rz),n\in\nz,
		\end{eqnarray*}
		where the jump rates in $S(k,x)$ correspond to $\Lambda|_{[0,1)}$, and 
		\begin{eqnarray*}
			Af(x)=I(x)+h(1,x)\Lambda(\{1\})=I(x)-f(x)\Lambda(\{1\}),\qquad x\in(-\infty,0],
		\end{eqnarray*}
		where $I(x)=\int h(u,x)\Lambda|_{[0,1)}({\rm d}u),$ $x\in\rz$. The additional term corresponds to the killing of the subordinator $-X$ at the rate $\Lambda(\{1\})$. Since $f\in\widehat{C}(\rz)$, $\lim_{n\to\infty}\sup_{x\in E_n}|(f(\log n^{-1})-f(x))\Lambda(\{1\})+f(x)\Lambda(\{1\})|=\Lambda(\{1\})\lim_{n\to\infty}|f(\log n^{-1})|=0$, i.e., the additional term converges, and again (\ref{eq_proof_dust_gen_conv}) holds true.
		\item The approach to the convergence of the generators is related to Bernstein polynomials. The $(k-1)$-th Bernstein polynomial 
		\begin{eqnarray*}
			\sum_{j=0}^{k-1}h(\tfrac{j}{k-1},x)\binom{k-1}{j}u^{j}(1-u)^{k-1-j}
		\end{eqnarray*} 
		of $h(\cdot,x)$ converges uniformly in $u\in[0,1]$ to $h(u,x)$ as $k\to\infty$, if $x\in\rz$ is fixed.
	\end{enumerate}
\end{remark}

\subsection{Proofs concerning the Bolthausen--Sznitman coalescent} \label{sec_proof_BS}

In this section $\Lambda = \lambda$ is the Lebesgue measure on $[0,1]$. Define $\alpha:=\alpha(t):=e^{-t},$ $t\geq 0$. 
The process $X^{(n)}=(X_t^{(n)})_{t\geq 0}=(\log N_t^{(n)}-\alpha\log n)_{t\geq 0}$ is a time-inhomogeneous Markov process. In order to prove convergence in $D_{\rz}[0,\infty)$ to $X$ we want to show the uniform convergence of the generators. Typical convergence results are stated for time-homogeneous Markov processes and in order to use these we are going to introduce the time-space process.

\subsubsection{Time-space process: semigroup and generator} \label{sec_proof_BS_time_space}

Define the time-space processes $\widetilde{X}^{(n)}:=(t,X_t^{(n)})_{t\geq0},$ $n\in\nz$, and $\widetilde{X}:=(t,X_t)_{t\geq0}$. It has been proven in \cite{boettcher13} that $\widetilde{X}^{(n)}$ and $\widetilde{X}$ are time-homogeneous Markov processes (and exist on a new probability space). In the following the tilde symbol indicates the time-space setting. Let $\widetilde{E}_n:=\{(s,x)\in[0,\infty)\times\rz|e^xn^{\alpha(s)}\in [n]\}$ denote the state space of $\widetilde{X}^{(n)}$, $\widetilde{E}:=[0,\infty)\times\rz$ denote the state space of $\widetilde{X}$ and define $k:=k(s,x,n):=e^xn^{\alpha(s)}\in\nz$ for $(s,x)\in\widetilde{E}_n$ and $n\in\nz$. Given $f\in B(\widetilde{E})$ and $s\geq0$, denote the function $x\mapsto f(s,x),$ $x\in\rz$, by $\pi f(s,x)$. The limiting process $X$ already is time-homogeneous. Recall that $D$, the space of twice differentiable functions $f:\rz\to\rz$ such that $f,f',f''\in\widehat{C}(\rz)$ and such that the map $x\mapsto xf'(x),$ $x\in\rz,$ belongs to $\widehat{C}(\rz)$, is a core for the generator $A$ of the semigroup $(T_t)_{t\geq0}$ corresponding to $X$. The semigroup $(\widetilde{T}_t)_{t\geq0}$ of $\widetilde{X}$, given by
\begin{eqnarray*}
	\widetilde{T}_tf(s,x):=\me(f(s+t,X_{s+t})|X_s=x)=\me(f(s+t,\alpha(t)x+X_t)),\quad(s,x)\in\widetilde{E},f\in B(\widetilde{E}),t\geq0,
\end{eqnarray*}
is a Feller semigroup. Let $\widetilde{D}$ denote the space of functions $f \in\widehat{C}(\widetilde{E})$ of the form $f(s,x)=\sum_{i=1}^{l}g_i(s)h_i(x)$ with $l\in\nz,h_i\in D$ and $g_i\in C_1([0,\infty))$ such that $g_i,g_i'\in\widehat{C}([0,\infty))$ for $i=1,\ldots,l$. Proposition \ref{prop_app_gen} states that $\widetilde{D}$ is a core for the generator $\widetilde{A}$ of $(\widetilde{T}_t)_{t\geq0}$ and
\begin{eqnarray}
	\widetilde{A}f(s,x)=\frac{\partial}{\partial s}f(s,x)+A\pi f(s,x),\qquad(s,x)\in\widetilde{E},f\in\widetilde{D}.
\label{eq_limit_gen_time_space}
\end{eqnarray}
The 'semigroup' $(T_{s,t}^{(n)})_{s,t\geq0}$ of $X^{(n)}$ is given by
\begin{eqnarray*}
	T_{s,t}^{(n)}f(x)&:=&\me(f(X_{s+t}^{(n)})|X_s^{(n)}=x)=\me(f(\log N_{s+t}^{(n)}-\alpha(s+t)\log n|N_s^{(n)}=k)
	\\&=&\me(f(\log N_{t}^{(k)}-\alpha(s+t)\log n)),\qquad(s,x)\in\widetilde{E}_n, f\in B(\rz), t\geq0.
\end{eqnarray*}
The 'generator' $(A_s^{(n)})_{s\geq0}$ of $(T_{s,t}^{(n)})_{s,t\geq0}$ is given by
\begin{eqnarray}
	A_s^{(n)}f(x) 
	&:=&\lim_{t\to 0}t^{-1}(T_{s,t}^{(n)}f(x)-f(x))\nonumber\\
	&=&\lim_{t\to 0}t^{-1}(\me(f(\log N_{t}^{(k)}-\alpha(s+t)\log n))-f(x))\nonumber\\
	&=&-f'(x)\alpha'(s)\log n+\sum_{j=1}^{k-1}(f(x+\log\tfrac{j}{k})-f(x))q_{k,j},\quad(s,x)\in\widetilde{E}_n.
\label{eq_gen_n_space}
\end{eqnarray}
Here $f\in C_1(\rz)$ such that $f,f'\in\widehat{C}(\rz)$. The semigroup $(\widetilde{T}_t^{(n)})_{t\geq0}$ of $\widetilde{X}^{(n)}$, given by
\begin{eqnarray*}
	\widetilde{T}_t^{(n)}(s,x)&:=&\me(f(s+t,X_{s+t})|X_s^{(n)}=x)\\&=&\me(f(s+t,\log N_t^{(k)}-\alpha(s+t)\log n)),\qquad(s,x)\in\widetilde{E}_n,f\in B(\widetilde{E}_n),t\geq0,n\in\nz,
\end{eqnarray*}
is a Feller semigroup on $\widehat{C}(\widetilde{E}_n)$ for every $n\in\nz$. On $\widetilde{D}$, or more precisely, for the restriction of $f\in\widetilde{D}$ to $\widetilde{E}_n$ the generator $\widetilde{A}^{(n)}$ of $\widetilde{T}^{(n)}$ is given by
\begin{eqnarray}
	\widetilde{A}^{(n)}f(s,x)=\frac{\partial}{\partial s}f(s,x)+A_s^{(n)}\pi f(s,x),\qquad(s,x)\in\widetilde{E}_n,n\in\nz.
\label{eq_n_gen_time_space}
\end{eqnarray}

\subsubsection{Proof of Corollary \ref{thm_BS} }

\begin{proof} [of Corollary \ref{thm_BS}] 
	Let $f \in D$. The approach to the proof is the same as in Section \ref{sec_proof_dust}, but the function $u\mapsto f(x+\log(1-u)),$ $u\in[0,1]$, demands second order approximation like in the integral part of the limiting generator (\ref{eq_lim_process_generator}). Define $h:[0,1]\times\rz\to\rz$ via $h(u,x):=u^{-2}(f(x+\log(1-u))-f(x)+uf'(x)),$ $u\in(0,1),$ $h(0,x):=\lim_{u\searrow 0}h(u,x)=2^{-1}(f''(x)-f'(x))$ and, since $f\in\widehat{C}(\rz)$, $h(1,x):=\lim_{u\nearrow1}h(u,x)=f'(x)-f(x)$ for $x\in\rz$. Taylor's theorem applied to $u\mapsto f(x+\log(1-u)),$ $u<1$, with evaluation point $u=0$ and exact integral remainder yields
	\begin{eqnarray*}
		h(u,x)&=&u^{-2}\int_{0}^{u}\frac{u-s}{(1-s)^2}(f''(x+\log(1-s))-f'(x+\log(1-s))){\rm d}s
		\\&=&\int_{0}^{1}\frac{1-s}{(1-us)^2}(f''(x+\log(1-us))-f'(x+\log(1-us))){\rm d}s,\quad u\in[0,1),x\in\rz.
	\end{eqnarray*}
	The latter formula of $h(u,x)$ shows that $h$ is bounded even as $u$ tends to zero. Putting $k=k(s,x,n)=e^xn^{\alpha(s)}$ in (\ref{eq_gen_n_space}) yields
	\begin{eqnarray*}
		A_s^{(n)}f(x)&=&f'(x)R(k,x)+S(k,x),\qquad(s,x)\in\widetilde{E}_n,n\in\nz,
	\end{eqnarray*}
	where
	\begin{eqnarray}
		R(k,x):=\log k-\sum_{j=1}^{k-1}\tfrac{k-j}{k}q_{k,j}-x,\qquad k\in\nz, x\in\rz,
	\label{eq_proof_BS_R(k,x)}
	\end{eqnarray}
	and
	\begin{eqnarray}
		S(k,x):=\sum_{j=1}^{k-1}(f(x+\log\tfrac{j}{k})-f(x)+\tfrac{k-j}{k}f'(x))q_{k,j},\qquad k\in\nz, x\in\rz.
	\label{eq_proof_BS_sum}
	\end{eqnarray}
	Further define $I(x):=\int_{[0,1]}h(u,x)\Lambda({\rm d}u),$ $x\in\rz$. By Eq. (\ref{eq_res_block_jump_rates_beta}) with $a=b=1$, $\tfrac{k-j}{k}q_{k,j} = (k-j+1)^{-1},$ $j\in\{1,\ldots,k-1\},k\geq 2,$ such that $\sum_{j=1}^{k-1}\tfrac{k-j}{k}q_{k,j}=\sum_{j=2}^{k}j^{-1}$ for $k\geq2$. As $n\to\infty$, $k=k(s,x,n)=e^xn^{\alpha(s)}\to\infty$ or $x\to-\infty$. Fix $T>0$. E.g., if $s\in[0,T]$ then either $k\geq n^{\alpha(T+\delta)}$ or $x<-\alpha(T)(1-\alpha(\delta))\log n$, where $\delta>0$ is a constant. The well-known asymptotics of the harmonic numbers states that $\sup_{x\in\rz}|R(k,x)-(1+\Psi(1)-x)|=|\log k-\sum_{j=1}^{k}j^{-1}-\Psi(1)|\rightarrow0$ as $k\to\infty$. Clearly, $\lim_{x\to-\infty}|f'(x)|=0$. Dividing the state space as above therefore implies
	\begin{eqnarray}
		\lim\limits_{n\to\infty}\sup_{(s,x)\in\widetilde{E}_n,s\in[0,T]}|f'(x)|| R(k,x)-(1+\Psi(1)-x)|=0.	
	\label{eq_proof_BS_conv_R(k,x)}
	\end{eqnarray}
	Substituting $k-j-1$ for $j$ in (\ref{eq_proof_BS_sum}) yields
	\begin{eqnarray*}
		S(k,x)&=&\sum_{j=0}^{k-2}(f(x+\log(1-\tfrac{j+1}{k}))-f(x)+\tfrac{j+1}{k}f'(x))q_{k,k-j-1}
		\\&=&\sum_{j=0}^{k-2}h(\tfrac{j+1}{k},x)\frac{(j+1)^2}{k^2}\binom{k}{j+2}\int_{[0,1]}u^j(1-u)^{k-2-j}\Lambda({\rm d}u)
		\\&=&\frac{k-1}{k}\sum_{j=0}^{k-2}h(\tfrac{j+1}{k},x)\frac{j+1}{j+2} \binom{k-2}{j}\int_{[0,1]}u^{j}(1-u)^{k-2-j}\Lambda({\rm d}u),\qquad k\in\nz,x\in\rz.
	\end{eqnarray*}
	Set $c:=\Lambda([0,1])>0$ and define the probability measure $\mathrm{Q}$ on $([0,1],\mathcal{B}\cap[0,1])$ as $Q:=c^{-1}\Lambda$. Let the random variables $Z_k,$ $k\in\nz,$ have distribution given by
	\begin{eqnarray*}
		\pr(Z_k=j)=\binom{k-2}{j}\int_{[0,1]}u^{j}(1-u)^{k-2-j}\mathrm{Q}({\rm d}u),\qquad j\in\{0,\ldots,k-2\},
	\end{eqnarray*}
	i.e., $Z_k$ has a mixed binomial distribution with sample size $k-2$ and random success rate $\mathrm{Q}$.  Let $Z$ have distribution $\mathrm{Q}$. Then
	\begin{eqnarray*}
		S(k,x)=c(1-k^{-1})\me((1-(Z_k+2)^{-1})h((Z_k+1)/k,x)),\qquad k\in\nz, x\in\rz, 
	\end{eqnarray*}
	and $I(x)=c\me(h(Z,x)),$ $x\in\rz$. It is easy to check that $(Z_k+1)/k\rightarrow Z$ in distribution as $k\to\infty$. The family of functions $\{h(\cdot,x)|x\in\rz\}$ is equicontinuous on $[\delta,1-\delta]$ for every $0<\delta<1/2$ and uniformly bounded on $[0,1]$. Due to $Q(\{0\})=c^{-1}\Lambda(\{0\})=0$, $Z_k\to\infty$ a.s. as $k\to\infty$ and thus $\lim_{k\to\infty}\me(1/(Z_k+2))=0$ such that the additional factor $1-(Z_k+2)^{-1}$ in the mean above can be omitted when considering the limit of $S(k,x)$ as $k\to\infty$. From Lemma \ref{local_lemma} it follows that 
	\begin{eqnarray}
		\lim_{k\to\infty}\sup_{x\in\rz}|S(k,x)-I(x)|=0.
	\label{eq_proof_BS_1}
	\end{eqnarray}  
	From $\lim_{x\to-\infty}h(Z,x)=0$ a.s., the fact that the functions $h(\cdot,x),$ $x\in\rz,$ are uniformly bounded and the dominated convergence theorem it follows that
	\begin{eqnarray}
		\lim_{x\to-\infty}|I(x)|=c\lim_{x\to-\infty}|\me(h(Z,x))|=0.
	\label{eq_proof_BS_2}
	\end{eqnarray}
	Since $f,f'\in\widehat{C}(\rz)$, $\lim_{x\to-\infty}S(k,x)=0$ for any $k\in\nz$ and, in view of (\ref{eq_proof_BS_1}) and (\ref{eq_proof_BS_2}),
	\begin{eqnarray}
		\lim_{x\to-\infty}\sup_{k\in\nz}|S(k,x)|=0.
	\label{eq_proof_BS_3}
	\end{eqnarray}
	As seen in the proof of Corollary \ref{thm_Dust}, Eqs. (\ref{eq_proof_BS_1})-(\ref{eq_proof_BS_3}) imply 
	\begin{eqnarray}
		\lim_{n\to\infty}\sup_{(s,x)\in\widetilde{E}_n,s\in[0,T]}|S(k,x)-I(x)|=0.
	\label{eq_proof_BS_S_and_I}
	\end{eqnarray}
	By (\ref{eq_proof_BS_conv_R(k,x)}), $\lim_{n\to\infty}\sup_{(s,x)\in\widetilde{E}_n,s\in[0,T]}|A_s^{(n)}f(x)-Af(x)|=0.$ Due to (\ref{eq_limit_gen_time_space}) and (\ref{eq_n_gen_time_space}), $\lim_{n\to\infty}\sup_{(s,x)\in\widetilde{E}_n,s\in[0,T]}|\widetilde{A}^{(n)}f(s,x)-\widetilde{A}f(s,x)|=0$
	for every function $f$ belonging to the core $\widetilde{D}$ and each $T>0$. From \cite[IV, Corollary 8.7]{ethierkurtz} it follows that $\widetilde{X}^{(n)}\to\widetilde{X}$ in $D_{\widetilde{E}}[0,\infty)$, hence $X^{(n)}\to X$ in $D_{\rz}[0,\infty)$ as $n\to\infty$.
\hfill$\Box$\end{proof}

\begin{remark}
	\begin{enumerate}
	\item Note that if $\Lambda=\lambda$ then $Z_k$ has a discrete uniform distribution on $\{0,\ldots,k-2\}$ and $Z$ has a continuous uniform distribution on $(0,1)$.
	\item Put $\gamma(k):=\sum_{j=1}^{k-1}\tfrac{k-j}{k}q_{k,j}=\sum_{j=2}^{k}(j-1)\binom{k}{j}\lambda_{k,j}$ for $k\geq2$. Among dust-free $\Lambda$-coalescents that stay infinite the proof works for the Bolthausen--Sznitman coalescent, because the precise asymptotics of $\gamma(k)/k=\log k-1$ as $k\to\infty$ is known. Observe that in the proof of Corollary \ref{thm_BS} the fact that $\Lambda=\lambda$ is only used to verify (\ref{eq_proof_BS_conv_R(k,x)}).
	\end{enumerate}
\end{remark}

\subsection{Proof of Theorem \ref{thm_main_convergence}} \label{sec_main}

In this section $\Lambda$ satisfies Assumption A. We continue to use the time-space setting and the notation of Subsection \ref{sec_proof_BS_time_space} with $\alpha$ replaced by $\alpha:=\alpha(t):=e^{-bt},$ $t\geq0$. Define $\Lambda_D:=\Lambda-b\lambda$ and let $\Lambda_D^{+},\Lambda_D^{-}$ denote the nonnegative measures constituting the Jordan decomposition $\Lambda_D=\Lambda_D^{+}-\Lambda_D^{-}$ of $\Lambda_D$. The decomposition of $\Lambda$ into a 'Bolthausen--Sznitman part' $b\lambda$ and a 'dust part' $\Lambda_D$ is transferred to the jump rates and the generator. Proving Theorem \ref{thm_main_convergence} now only requires to suitable arrange equations already obtained in Sections \ref{sec_proof_dust} and \ref{sec_proof_BS}. To be precise, the results of Section \ref{sec_proof_dust} are applied to the summands $\Lambda_D^{\pm}$ of $\Lambda_D$, but we omit this detail in the following.

\begin{proof} [of Theorem \ref{thm_main_convergence}] 
	Let $q_{k,j}^{\lambda},q_{k,j}^{D,+}$ and $q_{k,j}^{D,-}$ denote the rates of the block counting process corresponding to $\lambda,\Lambda_D^{+}$ and $\Lambda_D^{-}$, respectively, and define $q_{k,j}^{D}:=q_{k,j}^{D,+}-q_{k,j}^{D,-}$ for $j\in\{1,\ldots,k\}$ and $k\in\nz$. Obviously, $q_{k,j}=bq_{k,j}^{\lambda}+q_{k,j}^{D}$. Recall that $k=k(s,x,n)=e^xn^{\alpha(s)} \in \nz$ for $(s,x)\in\widetilde{E}_n$ and $n\in\nz$. From (\ref{eq_gen_n_space}) it follows that the 'generator' $A_s^{(n)}$ of $X^{(n)}=(X_t^{(n)})_{t\geq0}=(\log N_t^{(n)}-\alpha(t)\log n)_{t\geq0}$ is given by
	\begin{eqnarray*}
		A_s^{(n)}f(x)=bR(k,x)f'(x)+bS_{BS}(k,x)+S_D(k,x),\qquad(s,x)\in\widetilde{E}_n,n\in\nz,
	\end{eqnarray*}
	where
	\begin{eqnarray*}
		R(k,x)&:=&\log k-\sum_{j=1}^{k-1}\tfrac{k-j}{k}q_{k,j}^{\lambda}-x,\qquad k\in\nz,x\in\rz,
		\\S_{BS}(k,x)&:=&\sum_{j=1}^{k-1}(f(x+\log\tfrac{j}{k})-f(x)+\tfrac{k-j}{k}f'(x))q_{k,j}^{\lambda},\qquad k\in\nz,x\in\rz,
		\\S_D(k,x)&:=&\sum_{j=1}^{k-1}(f(x+\log\tfrac{j}{k})-f(x))q_{k,j}^{D},\qquad k\in\nz,x\in\rz,
	\end{eqnarray*}
	are defined as in (\ref{eq_proof_BS_R(k,x)}), (\ref{eq_proof_BS_sum}) and (\ref{eq_proof_dust_sum_and_integral}),
	and $f\in C_1(\rz)$ such that $f,f'\in\widehat{C}(\rz)$. By Lemma \ref{lem_lim_process_generator} and (\ref{eq_results_a}), the generator $A$ of $X=(X_t)_{t\geq 0}$ can be written as
	\begin{eqnarray*}
		Af(x)&=&b(1+\Psi(1)-x)f'(x)+b\int_{[0,1]}\frac{f(x+\log(1-u))-f(x)+uf'(x)}{u^2}\lambda({\rm d}u)\\&&+ \int_{[0,1]}\frac{f(x+\log(1-u))-f(x)}{u^2}\Lambda_D({\rm d}u),\qquad x\in\rz,f\in D.
	\end{eqnarray*}
	From Eqs. (\ref{eq_proof_BS_conv_R(k,x)}), (\ref{eq_proof_BS_S_and_I}) and (\ref{eq_proof_dust_1})-(\ref{eq_proof_dust_3}) it follows that $\lim_{n\to\infty}\sup_{(s,x)\in\widetilde{E}_n,s\in[0,T]}|A_s^{(n)}f(x)-Af(x)|=0$ for $f\in D$. Due to (\ref{eq_limit_gen_time_space}) and (\ref{eq_n_gen_time_space}),
	\begin{eqnarray}
		\lim_{n\to\infty}\sup_{(s,x)\in\widetilde{E}_n,s\in[0,T]}|\widetilde{A}^{(n)}f(s,x)-\widetilde{A}f(s,x)|=0
	\end{eqnarray}
	for every $f\in \widetilde{D}$ and $T>0$. By Proposition \ref{prop_app_gen}, the space $\widetilde{D}$ is a core for $\widetilde{A}$. Thus, it follows from  \cite[IV, Corollary 8.7]{ethierkurtz} that $\widetilde{X}^{(n)}\to\widetilde{X}$ in $D_{\widetilde{E}}[0,\infty)$, hence $X^{(n)}\to X$ in $D_{\rz}[0,\infty)$ as $n\to\infty$.
\hfill$\Box$\end{proof}

\subsection{Proof of Theorem \ref{thm_res_fix_line}} \label{sec_proof_fix_line}

In this section $\Lambda$ satisfies Assumption A. The process $Y^{(n)}=(Y_t^{(n)})_{t\geq0}=(\log L_t^{(n)}-e^{bt}\log n)_{t\geq0}$ is a possibly time-inhomogeneous Markov process, hence we set up the time-space framework. We provide two proofs. Using Theorem \ref{thm_main_convergence} and Siegmund-duality, in the first proof the convergence of the one-dimensional distributions and subsequently the uniform convergence of the semigroups is shown. The second proof, in which the uniform convergence of generators is shown, resembles previous ones.

\begin{proof} [First proof of Theorem \ref{thm_res_fix_line}]
	For $x\in\rz$ and $t\geq0$ define $m:=\lceil e^{y}n^{e^{bt}}\rceil\in\nz$. If $\varrho_t((-\infty,0))=\int_{[0,1]}u^{-2}\Lambda({\rm d}u)=\infty$ then $X_t$ has a continuous distribution for every $t>0$. 
	Eq. (\ref{eq_res_dual_discrete}) and Theorem \ref{thm_main_convergence} imply that
	\begin{eqnarray}
		\pr(Y_t^{(n)}\geq y)&=&\pr(L_t^{(n)}\geq m)=\pr(N_t^{(m)}\leq n)=\pr(X_t^{(m)}\leq \log n-e^{-bt}\log m)\nonumber\\&\rightarrow&\pr(X_t\leq-e^{-bt}y)=\pr(-e^{bt}X_t\geq y),\qquad y\in\rz,t\geq0,
	\label{eq_Thm_4_local_5}
	\end{eqnarray}
	as $n\to\infty$. If $\int_{[0,1]}u^{-2}\Lambda({\rm d}u)<\infty$ then the dust condition is satisfied such that $b=0$ and (\ref{eq_Thm_4_local_5}) holds true for $-y$ in the set $C_{X_t}$ of continuity points of $X_t$. Since $Y_t\overset{\text{d}}{=}-e^{bt}X_t$, $\lim_{n\to\infty}\pr(-Y_t^{(n)}\leq-y)=\pr(-Y_t\leq -y)$ for every $-y\in C_{X_t}=C_{-Y_t}$. Thus, $Y_t^{(n)}$ converges in distribution to $Y_t$ as $n\to\infty$ for every $t\geq0$. 
	
	Define the time-space processes $\widetilde{Y}^{(n)}:=(t,Y_t^{(n)})_{t\geq0},$ $n\in\nz,$ and $\widetilde{Y}:=(t,Y_t)_{t\geq0}$. The processes $\widetilde{Y}^{(n)}$ and $\widetilde{Y}$ are time-homogeneous Markov processes with state spaces $\widetilde{E}_{n}=\{(s,y)|s\geq0,e^{y}n^{e^{bs}}\in\{n,n+1,\ldots\}\}$ and $\widetilde{E}=[0,\infty)\times\rz$ and semigroups $(\widetilde{T}_t^{(n)})_{t\geq0}$ and $(\widetilde{T}_t)_{t\geq0}$. Define $k:=k(s,y,n):=e^yn^{e^{bs}}\in\{n,n+1,\ldots\}$ for $(s,y)\in\widetilde{E}_n$ and $n\in\nz$. Then
	\begin{eqnarray*}
		\widetilde{T}_t^{(n)}f(s,y)&=&\me(f(s+t,Y_{s+t}^{(n)})|Y_s^{(n)}=y)=\me(f(s+t,\log L_t^{(k)}-e^{b(t+s)}\log n))\\&=&\me(f(s+t,e^{bt}y+Y_t^{(k)})),\qquad (s,y)\in\widetilde{E}_{n},f\in B(\widetilde{E}),t\geq 0,n\in\nz.
	\end{eqnarray*}
	Fix $t>0$ and first let $f\in B(\widetilde{E})$ be of the form $f(s,y)=g(s)h(y),$ $(s,y)\in\widetilde{E}$, where $g\in B([0,\infty))$ and $h\in\widehat{C}(\rz)$. Clearly, $\widetilde{T}_t^{(n)}f(s,y)=g(s+t)\me(h(e^{bt}y+Y_t^{(k)})),$ $(s,y)\in \widetilde{E}_n,n\in\nz,$ and $\widetilde{T}_tf(s,y)=\me(f(s+t,Y_{s+t})|Y_s=y)=g(s+t)T_th(y)=g(s+t)\me(h(e^{bt}y+Y_t)),$ $(s,y)\in\widetilde{E},$ where the distribution of $Y_t$ is defined by its characteristic function $\chi_t$, given by (\ref{eq_res_char_funct_Y_t}). Note that $h$ is uniformly continuous and bounded. For $y\in\rz$ define the function $h_y:\rz\to\rz$ via $h_y(x):=h(e^{bt}y+x),$ $x\in\rz$. The family of functions $\{h_y|y\in\rz\}$ is equicontinuous and uniformly bounded. From the weak convergence of $Y_t^{(k)}$ to $Y_t$ as $k\to\infty$ and \cite[Theorem 3.1]{rangarao} it follows that $\lim_{k\to\infty}\sup_{y\in\rz}|\me(h(e^{bt}y+Y_t^{(k)}))-\me(h(e^{bt}y+Y_t))|=0.$ Since $k=e^yn^{e^{bs}}\geq n$ for $(s,y)\in\widetilde{E}_n$ and $n\in\nz$, $\lim_{n\to\infty}\sup_{(s,y)\in\widetilde{E}_n}|\me(h(e^{bt}y+Y_t^{(k)}))-\me(h(e^{bt}y+Y_t))|=0$. Thus,
	\begin{eqnarray}
		\lim_{n\to\infty}\sup_{(s,y)\in\widetilde{E}_n}|\widetilde{T}_t^{(n)}f(s,y)-\widetilde{T}_tf(s,y)|=0.
	\label{eq_Thm_4_local_4}	
	\end{eqnarray} 
	The algebra of functions $f\in B(\widetilde{E})$ of the form $f(s,y)=\sum_{i=1}^{l}g_i(s)h_i(y),$ $(s,y)\in\widetilde{E},$ where $l\in\nz,g_i\in B([0,\infty))$ and $h_i\in\widehat{C}(\rz)$, separates points and vanishes nowhere. According to the Stone--Weierstrass theorem for locally compact spaces (see e.g. \cite{debranges59}) it is a dense subset of $B(\widetilde{E})$ such that (\ref{eq_Thm_4_local_4}) holds true for $f\in B(\widetilde{E})$. \cite[IV, Theorem 2.11]{ethierkurtz} states that $\widetilde{Y}^{(n)}\to \widetilde{Y}$ in $D_{\widetilde{E}}[0,\infty)$, hence $Y^{(n)}\to Y$ in $D_{\rz}[0,\infty)$ as $n\to\infty$.
\hfill$\Box$\end{proof} 
	
The process $Y$ defined by (\ref{eq_res_semigroup_lim_fix_line}) and (\ref{eq_res_char_funct_Y_t}) is an Ornstein--Uhlenbeck type process (with nonnegative linear drift) as in \cite{satoyamazato84}. The underlying infinitely divisible distribution has characteristic exponent $y\mapsto\psi(-y),$ $y\in\rz.$ According to \cite[Theorem 3.1]{satoyamazato84}, $D$ is a core for the corresponding generator $A$ and
\begin{eqnarray}
	Af(y)=f'(y)(-a+by)+\int_{[0,1]}(f(y-\log(1-u))-f(y)-uf'(y))u^{-2}\Lambda({\rm d}u)
\label{eq_Thm_4_generator}
\end{eqnarray}
for $y\in\rz$ and $f\in D$; comparatively see Lemma \ref{lem_lim_process_generator} and its proof. 

\begin{proof} [Second proof of Theorem \ref{thm_res_fix_line}]
	The 'generator' $(A_s^{(n)})_{s\geq0}$ of $Y^{(n)}$ is given by
	\begin{eqnarray*}
		A_s^{(n)}f(y)=-f'(y)be^{bs}\log n+\sum_{j>e^yn^{e^{bs}}}(f(\log j-e^{bs}\log n)-f(y))\gamma_{e^yn^{e^{bs}},j},\quad (s,y)\in\widetilde{E}_n,n\in\nz.
	\end{eqnarray*}
	Here $f\in C_1(\rz)$ such that $f,f'\in\widehat{C}(\rz)$. Putting $k:=k(s,y,n):=e^yn^{e^{bs}}$ for $(s,y)\in\widetilde{E}_n$ and $n\in\nz$ yields
	\begin{eqnarray*}
		A_s^{(n)}f(y)=bf'(y)(-\log k+y)+\sum_{j=1}^{\infty}(f(y+\log(1+\tfrac{j}{k}))-f(y))\gamma_{k,k+j},\quad (s,y)\in\widetilde{E}_n,n\in\nz.
	\end{eqnarray*}
	Define $\Lambda_D:=\Lambda-b\lambda$ and let $\Lambda_D^{+},\Lambda_D^{-}$ denote the nonnegative measures constituting the Jordan decomposition $\Lambda_D=\Lambda_D^{+}-\Lambda_D^{-}$ of $\Lambda_D$. Let $\gamma_{k,j}^{\lambda},\gamma_{k,j}^{D,+}$ and $\gamma_{k,j}^{D,-}$ denote the jump rates of the fixation line corresponding to $\lambda,\Lambda_D^{+}$ and $\Lambda_D^{-}$, respectively, and define $\gamma_{k,j}^{D}:=\gamma_{k,j}^{D,+}-\gamma_{k,j}^{D,-}$ for $j\in\{k,k+1,\ldots\}$ and $k\in\nz$. Then $\gamma_{k,k+j}=b\gamma_{k,k+j}^{\lambda}+\gamma_{k,k+j}^{D},$ $k\in\nz,j\in\nz_0,$ and
	\begin{eqnarray}
		A_s^{(n)}f(y)=bf'(y)R(k,y)+bS_{BS}(k,y)+S_D(k,y),\qquad(s,y)\in\widetilde{E}_n,n\in\nz,
	\label{eq_proof_Thm_4_gen_discrete}
	\end{eqnarray}
	where 
	\begin{eqnarray*}
		R(k,y)&:=&-\log k+y+\sum_{j=1}^{k}\tfrac{j}{k}\gamma_{k,k+j}^{\lambda},\qquad k\in\nz,y\in\rz,\\
		S_{BS}(k,y)&:=&\sum_{j=1}^{\infty}(f(y+\log(1+\tfrac{j}{k}))-f(y)-\tfrac{j}{k}1_{[0,1]}(\tfrac{j}{k})f'(y))\gamma_{k,k+j}^{\lambda},\qquad k\in\nz,y\in\rz,\\
		S_D(k,y)&:=&\sum_{j=1}^{\infty}(f(y+\log(1+\tfrac{j}{k}))-f(y))\gamma_{k,k+j}^D,\qquad k\in\nz,y\in\rz,
	\end{eqnarray*}
	and $f\in C_1(\rz)$ such that $f,f'\in\widehat{C}(\rz)$. Using the decomposition of $\Lambda$ on Eq. (\ref{eq_Thm_4_generator}) yields
	\begin{eqnarray}
		Af(y)=bf'(y)(-1-\Psi(1)+y)+bI_{BS}(y)+I_D(y),\qquad y\in\rz,f\in D,
	\end{eqnarray}
	where
	\begin{eqnarray*}
		I_{BS}(y)&:=&\int_{[0,1]}(f(y-\log(1-u))-f(y)-uf'(y))u^{-2}\lambda({\rm d}u),\qquad y\in\rz,\\
		I_D(y)&:=&\int_{[0,1]}(f(y-\log(1-u))-f(y))u^{-2}\Lambda_D({\rm d}u),\qquad y\in\rz.
	\end{eqnarray*}
	Let $f\in D$. In the Bolthausen--Sznitman coalescent $\gamma_{k,k+j}^{\lambda}=k/(j(j+1))$ for $k,j\in\nz$ such that $\sum_{j=1}^{k}\tfrac{j}{k}\gamma_{k,k+j}^{\lambda}=\sum_{j=1}^{k}(j+1)^{-1}=H_{k+1}-1=\log k-1-\Psi(1)+o(1)$ as $k\to\infty$. Here $H_k$ denotes the $k$-th harmonic number for $k\in\nz$. Thus,
 	\begin{eqnarray}
 		\lim_{k\to\infty}\sup_{y\in\rz}|R(k,y)-(-1-\Psi(1)+y)|=0.
 	\label{eq_proof_Thm_4_R_conv}
 	\end{eqnarray}

	The function $h_{BS}:[0,1]\times\rz\to\rz$, defined via $h_{BS}(u,y):=u^{-2}(f(y-\log(1-u))-f(y)-\tfrac{u}{1-u}1_{[0,1/2]}(u)f'(y)),$ $u\in[0,1],y\in\rz,$ is bounded. 
 	Let the random variables $Z_k,$ $k\in\nz,$ have distribution given by
 	\begin{eqnarray*}
	 	\pr(Z_k=j)=\binom{k+j-2}{j-1}\int_{[0,1]}u^{j-1}(1-u)^k\lambda({\rm d}u),\qquad j,k\in\nz,
 	\end{eqnarray*}
 	i.e., $Z_k-1$ has a mixed negative binomial distribution. Observe that $h_{BS}(1-(1+\tfrac{j}{k})^{-1},y)=(\tfrac{j}{k+j})^{-2}(f(y+\log(1+\tfrac{j}{k}))-f(y)-\tfrac{j}{k}1_{[0,1]}(\tfrac{j}{k})f'(y)),$ $y\in\rz,$ 
 	and $\gamma_{k,k+j}^{\lambda}=(\tfrac{j}{k+j})^{-2}(1+(k+j)^{-1})(1-(j+1)^{-1})\pr(Z_k=j)$ for $j,k\in\nz$. Hence,
 	\begin{eqnarray*}
 		S_{BS}(k,y)=\me(h_{BS}(1-(1+Z_k/k)^{-1},y)(1+(k+Z_k)^{-1})(1-(Z_k+1)^{-1})).
 	\end{eqnarray*}
 	Let $Z$ have uniform distribution on $(0,1)$ such that $I_{BS}(y)=\me(h_{BS}(Z,y))$ for $y\in\rz$. Here it is used that $\int_{[0,1]}u^{-2}(u-\tfrac{u}{1-u}1_{[0,1/2]}(u))\lambda({\rm d}u)=\int_{0}^{1/2}-(1-u)^{-1}\lambda({\rm d}u)+\int_{1/2}^{1}u^{-1}\lambda({\rm d}u)=0.$ The function $g:(0,\infty)\to(0,1)$, defined via $g(u):=1-(1+u)^{-1},$ $u\in(0,\infty),$ is bounded and continuous. Since $Z_k/k\to Z/(1-Z)$ in distribution as $k\to\infty$, $1-(1+Z_k/k)^{-1}=g(Z_k/k)\to g(Z/(1-Z))=Z$ in distribution as $k\to\infty$. In particular, the random variables have values in $[0,1]$. When considering the limit $k\to\infty$, the factor $(1+(k+Z_k)^{-1})(1-(Z_k+1)^{-1})$ has no influence on $S_{BS}(k,y)$. From Lemma \ref{local_lemma} it follows that 
 	\begin{eqnarray}
 		\lim_{k\to\infty}\sup_{y\in\rz}|S_{BS}(k,y)-I_{BS}(y)|=0.
 	\label{eq_proof_Thm_4_conv_S_BS}
 	\end{eqnarray}
 	
	The measure $\Lambda_D$ is real-valued. Eq. (\ref{eq_proof_Thm_4_S_D_conv}) below can be proved when $\Lambda_D$ is replaced by $\Lambda_D^{+}$ and $\Lambda_D^{-}$ in this paragraph, and then holds true for $\Lambda_D$ by linearity. The function $h_D:[0,1]\times\rz\to\rz,$ defined via $h_D(u,y):=u^{-1}(f(y-\log(1-u))-f(y)),$ $u\in[0,1],y\in\rz,$ is bounded. By assumption $c:=\int_{[0,1]}u^{-1}\Lambda_D({\rm d}u)<\infty.$ Define the probability measure $Q$ on $([0,1],\mathcal{B}\cap[0,1])$ via $Q(A):=c^{-1}\int_A u^{-1}\Lambda_D({\rm d}u),$ $A\in\mathcal{B}\cap[0,1].$ Let the random variables $Z_k,$ $k\in\nz,$ have distribution given by
 	\begin{eqnarray*}
 		\pr(Z_k=j)=\binom{k+j-1}{j}\int_{[0,1]}u^{j}(1-u)^kQ({\rm d}u),\qquad j\in\nz_0,k\in\nz,
 	\end{eqnarray*}
 	i.e., $Z_k$ has a mixed negative binomial distribution. Observe that $h_D(1-(1+\tfrac{j}{k})^{-1},y)=(f(y+\log(1+\tfrac{j}{k}))-f(y))\tfrac{k+j}{j},$ $y\in\rz,$ and $\gamma_{k,k+j}^{D}=c\tfrac{k+j}{j}(1-(1+j)^{-1})\pr(Z_k=j)$ for $j,k\in\nz$. Hence,
 	\begin{eqnarray*}
 		S_D(k,y)&=&\sum_{j=0}^{\infty}(f(y+\log(1+\tfrac{j}{k})-f(y))\gamma_{k,k+j}^{D}\\&=&c\me(h_D(1-(1+Z_k/k)^{-1},y)(1-(1+Z_k)^{-1})),\qquad k\in\nz,y\in\rz.
 	\end{eqnarray*}
 	Let the random variable $Z$ have distribution $Q$. In particular, $I_D(y)=c\me(h_D(Z,y)),$ $y\in\rz$. By Lemma \ref{local_lemma} and since $1-(1+Z_k/k)^{-1}$ converges in distribution to $Z$ as $k\to\infty$, $\lim_{k\to\infty}\sup_{y\in\rz}|\me(h_D(1-(1+Z_k/k)^{-1},y))-\me(h_D(Z,y))|=0.$ Thus,
 	\begin{eqnarray}
 		\lim_{k\to\infty}\sup_{y\in\rz}|S_D(k,y)-I_D(y)|=0.
 	\label{eq_proof_Thm_4_S_D_conv}
 	\end{eqnarray}
	
	Taking into account that $k=e^yn^{e^{bs}}\geq n$ for $(s,y)\in\widetilde{E}_n$ and $n\in\nz$, Eqs. (\ref{eq_proof_Thm_4_gen_discrete})-(\ref{eq_proof_Thm_4_S_D_conv}) imply
 	\begin{eqnarray*}
 		\lim_{n\to\infty}\sup_{(s,y)\in\widetilde{E}_n}|A_s^{(n)}f(y)-Af(y)|=0.
 	\end{eqnarray*}
	The time-space variant of \cite[IV, Corollary 8.7]{ethierkurtz} as implemented in the proof of Theorem \ref{thm_main_convergence} yields the desired convergence of $Y^{(n)}\to Y$ in $D_{\rz}[0,\infty)$ as $n\to\infty$.
\hfill$\Box$\end{proof}

\subsection{Appendix}

\begin{lemma}
	The $\Lambda$-coalescent does not come down from infinity under Assumption A.
\label{lem_app_not_cdi}
\end{lemma}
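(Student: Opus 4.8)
The plan is to verify the classical criterion of Schweinsberg \cite{schweinsberg00}: with
\begin{eqnarray*}
	\gamma_k:=\sum_{j=2}^{k}(j-1)\binom{k}{j}\lambda_{k,j},\qquad k\geq 2,
\end{eqnarray*}
denoting the total rate at which the number of blocks decreases, weighted by the size of the decrease, the $\Lambda$-coalescent comes down from infinity if and only if $\sum_{k\geq 2}\gamma_k^{-1}<\infty$. Since $\Lambda(\{0\})=\Lambda(\{1\})=0$ the criterion applies directly, so it suffices to show $\sum_{k\geq2}\gamma_k^{-1}=\infty$, and for that it is enough to establish an upper bound of the form $\gamma_k\leq C k\log k$ for all sufficiently large $k$ and some constant $C$.

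First I would record the elementary binomial identity, obtained by differentiating $u\mapsto\sum_{j=0}^k\binom{k}{j}u^j(1-u)^{k-j}=1$,
\begin{eqnarray*}
	\sum_{j=2}^{k}(j-1)\binom{k}{j}u^{j}(1-u)^{k-j}=ku-1+(1-u)^{k}=u\sum_{i=0}^{k-1}\bigl(1-(1-u)^i\bigr),\qquad u\in[0,1],
\end{eqnarray*}
which yields the integral representation $\gamma_k=\int_{[0,1]}\bigl(ku-1+(1-u)^k\bigr)u^{-2}\Lambda({\rm d}u)$ and, since the inner sum above has $k$ nonnegative summands each at most $1$, the uniform estimate $0\leq\bigl(ku-1+(1-u)^k\bigr)u^{-2}\leq k/u$ for all $u\in(0,1]$. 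Because $\lambda_{k,j}$ depends linearly on $\Lambda$, so does $\gamma_k$; writing $\gamma_k[\nu]$ for the quantity associated with a measure $\nu$ and using the decomposition $\Lambda=b\lambda+\Lambda_D$ with $\Lambda_D=\Lambda-b\lambda=\Lambda_D^{+}-\Lambda_D^{-}$, we obtain $\gamma_k=b\,\gamma_k[\lambda]+\gamma_k[\Lambda_D^{+}]-\gamma_k[\Lambda_D^{-}]$.

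For the Lebesgue part one evaluates $\gamma_k[\lambda]=\int_0^1 u^{-1}\sum_{i=0}^{k-1}(1-(1-u)^i)\,{\rm d}u=\sum_{i=0}^{k-1}H_i=k(H_k-1)$, where $H_k$ is the $k$-th harmonic number, using $\int_0^1 u^{-1}(1-(1-u)^i)\,{\rm d}u=H_i$; in particular $\gamma_k[\lambda]\sim k\log k$. For the two dust parts, the estimate $\bigl(ku-1+(1-u)^k\bigr)u^{-2}\leq k/u$ together with Assumption A gives $\gamma_k[\Lambda_D^{\pm}]\leq k\int_{[0,1]}u^{-1}\Lambda_D^{\pm}({\rm d}u)<\infty$, hence $|\gamma_k[\Lambda_D^{+}]-\gamma_k[\Lambda_D^{-}]|\leq C_1 k$ with $C_1:=\int_{[0,1]}u^{-1}(\Lambda_D^{+}+\Lambda_D^{-})({\rm d}u)$. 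Putting the pieces together, $0\leq\gamma_k\leq b k(H_k-1)+C_1 k\leq C k\log k$ for all large $k$ (if $b=0$ one even has $\gamma_k\leq C_1 k$), so $\sum_{k\geq3}\gamma_k^{-1}\geq C^{-1}\sum_{k\geq3}(k\log k)^{-1}=\infty$, and the $\Lambda$-coalescent does not come down from infinity.

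There is no substantial obstacle here; the only points requiring a little care are to recognise that the Bernstein-type polynomial $u^{-2}\sum_{j=2}^{k}(j-1)\binom{k}{j}u^j(1-u)^{k-j}$ admits the $u$-uniform bound $k/u$, which is what makes the dust contribution $O(k)$ and hence negligible against the Bolthausen--Sznitman contribution of order $k\log k$, and to keep the bookkeeping with the signed measure $\Lambda_D$ straight by splitting into $\Lambda_D^{\pm}$ before integrating, which is precisely where the two finiteness requirements in Assumption A are used.
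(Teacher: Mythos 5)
Your proof is correct and takes essentially the same route as the paper: both rest on Schweinsberg's summability criterion together with the decomposition $\Lambda=b\lambda+\Lambda_D^{+}-\Lambda_D^{-}$ supplied by Assumption A, showing that the Lebesgue part contributes a rate of order $k\log k$ while the dust parts contribute only $O(k)$, whence $\sum_{k\geq2}\gamma_k^{-1}=\infty$. The only cosmetic difference is that you compute Schweinsberg's $\gamma_k$ exactly (using $\gamma_k[\lambda]=k(H_k-1)$ and the pointwise bound $(ku-1+(1-u)^k)u^{-2}\leq k/u$), whereas the paper bounds the slightly larger quantity $\eta_k^{\Lambda}\leq\eta_k^{b\lambda}+\eta_k^{|\Lambda_D|}$ via the dominating measure $b\lambda+|\Lambda_D|$.
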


\begin{proof}
	Suppose that $\Lambda$ satisfies Assumption A. Define $\Lambda_D:=\Lambda-b\lambda$, let $\Lambda_D^{+}$ and $\Lambda_D^{-}$ denote the nonnegative measures constituting the Jordan decomposition $\Lambda_D=\Lambda_D^{+}-\Lambda_D^{-}$ of $\Lambda_D$ and let $|\Lambda_D|:=\Lambda_D^{+}+\Lambda_D^{-}$ denote the (total) variation of $\Lambda_D$. Define $\eta_k^{\Lambda}:=k\sum_{j=0}^{k-2}\int_{[0,1]}(1-u)^j\Lambda({\rm d}u)$ and $\eta_k^{b\lambda}$ and $\eta_k^{|\Lambda_D|}$ similarly with $b\lambda$ and $|\Lambda_D|$ in place of $\Lambda$ for $k\geq2$. By assumption,
	\begin{eqnarray*}
		\lim_{k\to\infty}k^{-1}\eta_k^{|\Lambda_D|}=\int_{[0,1]}u^{-1}|\Lambda_D|({\rm d}u)<\infty.
	\end{eqnarray*}
	Since
	\begin{eqnarray*}
		(k\log k)^{-1}\eta_k^{b\lambda}= b(\log k)^{-1}\sum_{j=0}^{k-2}\int_0^{1}(1-u)^j{\rm d}u=b(\log k)^{-1}\sum_{j=0}^{k-2}(j+1)^{-1}\rightarrow b,\qquad k\to\infty,
	\end{eqnarray*}
	it follows that $\eta_k^{b\lambda}+\eta_k^{|\Lambda_D|}\sim bk\log k$ as $k\to\infty$. Due to $\Lambda\leq b\lambda+|\Lambda_D|$, it holds that $\eta_k^{\Lambda}\leq\eta_k^{b\lambda}+\eta_k^{|\Lambda_D|}$ for $k\geq2$. Hence,
	\begin{eqnarray*}
		\sum_{k=2}^{\infty}\big(\eta_k^{\Lambda}\big)^{-1}\geq\sum_{k=2}^{\infty}\big(\eta_k^{b\lambda}+\eta_k^{|\Lambda_D|}\big)^{-1}
		=\infty.
	\end{eqnarray*}
	The claim then follows from Schweinsberg's criterion \cite[Corollary 2]{schweinsberg00}.
\hfill$\Box$\end{proof}

\noindent The following lemma is a generalization of the integral criterion of convergence in distribution and is used in Sections \ref{sec_proof_dust}-\ref{sec_proof_fix_line} to prove the uniform convergence of generators.

\begin{lemma}
	Let $X,X_1,X_2,\ldots$ be random variables on a probability space $(\Omega,\mathcal{F},\pr)$ with values in $[0,1]$ such that $\pr(X=0)=\pr(X=1)=0$ and $X_n\to X$ in distribution as $n\to\infty$. Suppose that the family $F$ of functions $f:[0,1]\to\rz$ is uniformly bounded on $[0,1]$ and equicontinuous on $[\delta,1-\delta]$ for every $0<\delta<1/2$. In particular, $f\in F$ is bounded and continuous on $(0,1)$. Then
	\begin{eqnarray*}
		\lim_{n\to\infty}\sup_{f\in F}|\me(f(X_n))-\me(f(X))|=0.
	\end{eqnarray*}
	\label{local_lemma}
\end{lemma}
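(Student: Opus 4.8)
The plan is to separate the problem into two parts: controlling the contribution of the events $\{X_n\le\delta\}$ and $\{X_n\ge 1-\delta\}$ near the endpoints, where equicontinuity is not assumed, and handling the interior interval $[\delta,1-\delta]$, where it is, by a compactness argument.

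First I would fix $\varepsilon>0$ and put $M:=\sup_{f\in F}\sup_{u\in[0,1]}|f(u)|<\infty$. Since $X$ has values in $[0,1]$ with $\pr(X=0)=\pr(X=1)=0$, continuity from above of $\pr$ gives $\pr(X\le\delta)\to 0$ and $\pr(X\ge 1-\delta)\to 0$ as $\delta\searrow 0$; so I choose $\delta\in(0,1/2)$ with $\pr(X\le\delta)+\pr(X\ge 1-\delta)<\varepsilon$. Since $\{X_n\notin[\delta,1-\delta]\}\subseteq\{X_n\le\delta\}\cup\{X_n\ge 1-\delta\}$ and both $\{u\le\delta\}$ and $\{u\ge 1-\delta\}$ are closed, the portmanteau theorem yields $\limsup_{n\to\infty}\pr(X_n\notin[\delta,1-\delta])\le\pr(X\le\delta)+\pr(X\ge 1-\delta)<\varepsilon$, so there is $N_1$ with $\pr(X_n\notin[\delta,1-\delta])<\varepsilon$ for all $n\ge N_1$; note also $\pr(X\notin[\delta,1-\delta])<\varepsilon$.

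Next, for each $f\in F$ let $g_f\colon[0,1]\to\rz$ be the constant extension of $f|_{[\delta,1-\delta]}$, i.e.\ $g_f(u):=f(\delta)$ for $u<\delta$, $g_f(u):=f(u)$ for $u\in[\delta,1-\delta]$, and $g_f(u):=f(1-\delta)$ for $u>1-\delta$. Then $\|g_f\|\le M$, and using the assumed equicontinuity of $F$ on $[\delta,1-\delta]$ together with the constancy of $g_f$ outside that interval, the family $\{g_f\colon f\in F\}$ is uniformly bounded and equicontinuous on all of $[0,1]$; by the Arzel\`{a}--Ascoli theorem its closure in $(C([0,1]),\|\cdot\|)$ is compact. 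Fixing a finite $\varepsilon$-net $g_1,\dots,g_m\in C([0,1])$ for this family, using $\me(g_i(X_n))\to\me(g_i(X))$ for each $i$ (ordinary weak convergence for a single bounded continuous function), and choosing $N_2\ge N_1$ with $\max_{1\le i\le m}|\me(g_i(X_n))-\me(g_i(X))|<\varepsilon$ for $n\ge N_2$, a three-$\varepsilon$ argument gives $\sup_{f\in F}|\me(g_f(X_n))-\me(g_f(X))|\le 3\varepsilon$ for $n\ge N_2$.

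Finally I would assemble the bound. Because $f(X_n)1_{\{X_n\in[\delta,1-\delta]\}}=g_f(X_n)1_{\{X_n\in[\delta,1-\delta]\}}$, one has $\me(f(X_n))=\me(g_f(X_n))-\me(g_f(X_n)1_{\{X_n\notin[\delta,1-\delta]\}})+\me(f(X_n)1_{\{X_n\notin[\delta,1-\delta]\}})$ and likewise for $X$; the four correction terms are each at most $M\,\pr(\,\cdot\notin[\delta,1-\delta])<M\varepsilon$ for $n\ge N_2$. Hence $\sup_{f\in F}|\me(f(X_n))-\me(f(X))|\le 4M\varepsilon+3\varepsilon$ for $n\ge N_2$, and letting $\varepsilon\searrow 0$ proves the lemma. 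The one genuinely delicate point is that equicontinuity is allowed to degenerate at $0$ and $1$: one must peel off the endpoint mass first --- which is exactly where the hypothesis $\pr(X=0)=\pr(X=1)=0$ enters --- and only then is the interior amenable to the Arzel\`{a}--Ascoli plus finite-net reduction; the remainder is bookkeeping.
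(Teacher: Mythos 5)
Your proof is correct, and its skeleton coincides with the paper's: you build the same constant extension of $f|_{[\delta,1-\delta]}$ (your $g_f$ is the paper's $\tilde f$), you control the endpoint contributions by the mass $\pr(X_n\notin[\delta,1-\delta])$ and $\pr(X\notin[\delta,1-\delta])$, which is exactly where $\pr(X=0)=\pr(X=1)=0$ enters, and you finish with the same error bookkeeping (your four terms of size $M\varepsilon$ are the paper's $2M\pr(X_n\notin[\delta,1-\delta])+2M\pr(X\notin[\delta,1-\delta])$). The one genuine difference lies in how the uniform convergence $\sup_{f\in F}|\me(g_f(X_n))-\me(g_f(X))|\to 0$ is obtained: the paper simply cites Ranga Rao's theorem on uniform convergence of expectations over uniformly bounded, equicontinuous families under weak convergence, whereas you prove this step from scratch via Arzel\`a--Ascoli compactness of $\{g_f\colon f\in F\}$ in $C([0,1])$, a finite $\varepsilon$-net, and ordinary weak convergence applied to the finitely many net functions; you also spell out, via continuity from above and the portmanteau theorem for the closed set $[0,\delta]\cup[1-\delta,1]$, the choice of $\delta$ and the eventual bound on $\pr(X_n\notin[\delta,1-\delta])$, which the paper asserts without detail. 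What your route buys is self-containedness (in effect a proof of the special case of Ranga Rao's result that is needed); what the paper's citation buys is brevity and a statement valid on general separable metric state spaces, where an Arzel\`a--Ascoli argument would need the extra care your compact-interval setting makes unnecessary.
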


\begin{proof}
	Define $M:=\sup_{f\in F}\|f\|<\infty$ and let $\varepsilon>0$ be arbitrary. The assumption $\pr(X=0)=\pr(X=1)=0$ and the convergence of $X_n$ to $X$ in distribution as $n\to\infty$ provide the existence of $0<\delta<1/2$ and $n_0\in\nz$ such that $\pr(X_n\not\in[\delta,1-\delta])<\varepsilon/(4M)$ for $n\geq n_0$ and $\pr(X\not\in[\delta,1-\delta])<\varepsilon/(4M)$. For $f\in F$ define $\tilde{f}:[0,1]\to\rz$ via $\tilde{f}(u):=f(\delta),$ $0\leq u\leq\delta,$ $\tilde{f}(u):=f(u),$ $\delta\leq u\leq 1-\delta,$ and $\tilde{f}(u):=f(1-\delta),$ $1-\delta\leq u\leq 1$. Then $\{\tilde{f}|f\in F\}$ is bounded (by $M$) and equicontinuous on $[0,1]$. \cite[Theorem 3.1]{rangarao} yields
	\begin{eqnarray*}
		\lim_{n\to\infty}\sup_{f\in F}|\me(\tilde{f}(X_n))-\me(\tilde{f}(X))|=0.
	\end{eqnarray*}
	From
	\begin{eqnarray*}
		\vert\me(f(X_n))-\me(f(X))\vert&\leq&\me(\vert f(X_n)-\tilde{f}(X_n)\vert)\\&&+\vert\me(\tilde{f}(X_n))-\me(\tilde{f}(X))\vert+\me(\vert\tilde{f}(X)-f(X)\vert)
		\\&\leq&2M\pr(X_n\not\in[\delta,1-\delta])\\&&+2M\pr(X\not\in[\delta,1-\delta])+\vert\me(\tilde{f}(X_n))-\me(\tilde{f}(X))\vert,\qquad n\in\nz,f\in F,
	\end{eqnarray*}
	it follows that $\lim_{n\to\infty}\sup_{f\in F}\vert\me(f(X_n))-\me(f(X))\vert\leq\varepsilon$.	Since $\varepsilon>0$ is arbitrary the proof is complete.
\hfill$\Box$\end{proof}

\begin{remark} In \cite[Theorem 3.1]{rangarao} the state space is more generally a separable metric space, but equicontinuity of $F$ is required to hold on the whole state space.
\end{remark}

\noindent Let $E$ be a complete separable metric space and equip $\widetilde{E}:=[0,\infty)\times E$ with the product metric. The following proposition treats the generator of time-space processes of time-homogeneous Feller processes.

\begin{proposition}
	Suppose that $(T_t)_{t\geq0}$ is a Feller semigroup on $\widehat{C}(E)$ with generator $A$ and that $D$ is a core for $A$. For $f\in\widehat{C}(\widetilde{E})$ and $s\in[0,\infty)$ let $\pi f(s,x)$ denote the function $x \mapsto f(s,x),$ $x\in E$. The semigroup $(\widetilde{T}_t)_{t\geq 0}$, defined via
	\begin{eqnarray*}
		\widetilde{T}_tf(s,x):=T_t\pi f(s+t,x),\qquad (s,x)\in\widetilde{E}, f\in B(\widetilde{E}), t\geq 0,
	\end{eqnarray*}
	is a Feller semigroup on $\widehat{C}(\widetilde{E})$. Let $\widetilde{D}$ denote the space of functions $f\in \widehat{C}(\widetilde{E})$ of the form $f(s,x)=\sum_{i=1}^{l}g_i(s)h_i(x),$ $(s,x)\in\widetilde{E}$, where $l\in\nz, h_i\in D$ and $g_i\in C_1([0,\infty))$ such that $g_i, g_i'\in\widehat{C}([0,\infty))$ for $i=1,\ldots,l$. Then $\widetilde{D}$ is a core for the generator $\widetilde{A}$ of $(\widetilde{T}_t)_{t\geq0}$ and
	\begin{eqnarray}
		\widetilde{A}f(s,x)=\frac{\partial}{\partial s}f(s,x)+A\pi f(s,x),\qquad (s,x)\in\widetilde{E}, f\in\widetilde{D}.
	\label{eq_app_prop_gen}
	\end{eqnarray}
\label{prop_app_gen}
\end{proposition}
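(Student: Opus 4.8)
The plan is to build the time-space semigroup $(\widetilde{T}_t)_{t\geq0}$ on $\widehat{C}(\widetilde{E})$, verify it is Feller by transferring the corresponding properties of $(T_t)_{t\geq0}$, then identify the generator on the product-form functions in $\widetilde{D}$ by a direct differentiation, and finally upgrade $\widetilde{D}$ to a core via the Stone--Weierstrass theorem combined with the standard core criterion. First I would check the semigroup property: for $f\in B(\widetilde{E})$,
\begin{eqnarray*}
	\widetilde{T}_s\widetilde{T}_tf(r,x)=T_s\pi(\widetilde{T}_tf)(r+s,x)=T_s\big(T_t\pi f(r+s+t,\cdot)\big)(x)=T_{s+t}\pi f(r+s+t,x)=\widetilde{T}_{s+t}f(r,x),
\end{eqnarray*}
using that $x\mapsto T_t\pi f(r+s+t,x)$ is exactly $\pi(\widetilde{T}_tf)(r+s,x)$. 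For the Feller property one notes that if $f\in\widehat{C}(\widetilde{E})$ then $(s,x)\mapsto T_t\pi f(s+t,x)$ is again in $\widehat{C}(\widetilde{E})$: continuity in $x$ follows from $T_t\colon\widehat{C}(E)\to\widehat{C}(E)$, joint continuity and vanishing at infinity in $(s,x)$ follow from uniform continuity of $f$ on $\widetilde{E}$ together with the contraction property $\|T_t g\|\le\|g\|$ (so $\|\widetilde{T}_tf(s,\cdot)-\widetilde{T}_tf(s',\cdot)\|\le\sup_x|f(s+t,x)-f(s'+t,x)|$). Strong continuity, $\|\widetilde{T}_tf-f\|\to0$ as $t\searrow0$, is obtained by splitting $\widetilde{T}_tf(s,x)-f(s,x)=\big(T_t\pi f(s+t,x)-T_t\pi f(s,x)\big)+\big(T_t\pi f(s,x)-f(s,x)\big)$, bounding the first bracket by $\sup_x|f(s+t,x)-f(s,x)|\to0$ uniformly (uniform continuity of $f$) and the second by strong continuity of $(T_t)_{t\geq0}$ applied uniformly — here one uses that $\{\pi f(s,\cdot):s\geq0\}$ has a suitable equicontinuity/compactness so that $\sup_s\|T_t\pi f(s,\cdot)-\pi f(s,\cdot)\|\to0$; this is the one slightly delicate point, handled by approximating $f$ uniformly by functions of the product form and using $\|T_t h-h\|\to 0$ for each fixed $h\in\widehat{C}(E)$.

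Next I would compute $\widetilde{A}$ on $\widetilde{D}$. It suffices to treat a single product $f(s,x)=g(s)h(x)$ with $g\in C_1([0,\infty))$, $g,g'\in\widehat{C}([0,\infty))$ and $h\in D\subseteq\mathrm{dom}(A)$, since $\widetilde{D}$ is the linear span of such functions and $\widetilde{A}$ is linear. Then $\widetilde{T}_tf(s,x)=g(s+t)\,T_th(x)$, so
\begin{eqnarray*}
	\frac{\widetilde{T}_tf(s,x)-f(s,x)}{t}=\frac{g(s+t)-g(s)}{t}T_th(x)+g(s)\frac{T_th(x)-h(x)}{t}.
\end{eqnarray*}
As $t\searrow0$ the first term converges to $g'(s)h(x)$ uniformly in $(s,x)$ (difference quotient of $g$ converges uniformly since $g'\in\widehat{C}$, and $\|T_th\|\le\|h\|$ with $T_th\to h$ uniformly), and the second converges to $g(s)Ah(x)$ uniformly (since $h\in\mathrm{dom}(A)$ means $t^{-1}(T_th-h)\to Ah$ in $\widehat{C}(E)$, and $\|g\|_\infty<\infty$). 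Hence $f\in\mathrm{dom}(\widetilde{A})$ and $\widetilde{A}f(s,x)=g'(s)h(x)+g(s)Ah(x)=\frac{\partial}{\partial s}f(s,x)+A\pi f(s,x)$, which is (\ref{eq_app_prop_gen}). In particular $\widetilde{D}\subseteq\mathrm{dom}(\widetilde{A})$.

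Finally, to show $\widetilde{D}$ is a core I would invoke the standard criterion (Ethier--Kurtz \cite[I, Proposition 3.3]{ethierkurtz}): a subspace $\widetilde{D}\subseteq\mathrm{dom}(\widetilde{A})$ that is dense in $\widehat{C}(\widetilde{E})$ and invariant under $(\widetilde{T}_t)_{t\geq0}$ is a core. Density of $\widetilde{D}$ in $\widehat{C}(\widetilde{E})$ follows from the Stone--Weierstrass theorem for locally compact spaces (as already used in the first proof of Theorem \ref{thm_res_fix_line}): the algebra of finite sums $\sum_i g_i(s)h_i(x)$ with $g_i,h_i$ in the relevant dense subclasses separates points, vanishes nowhere, and is closed under products up to the dense subclasses; since $D$ is a core hence dense in $\widehat{C}(E)$ and $\{g\in C_1([0,\infty)):g,g'\in\widehat{C}([0,\infty))\}$ is dense in $\widehat{C}([0,\infty))$, one gets density of $\widetilde{D}$. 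For invariance, $\widetilde{T}_t$ maps $g(s)h(x)$ to $g(s+t)(T_th)(x)$; the issue is that $T_th$ need not lie in $D$ even though $h$ does. I would circumvent this exactly as in the proof of Lemma \ref{lem_lim_process_generator} / Ethier--Kurtz's treatment of cores: rather than proving literal $\widetilde{T}_t$-invariance of $\widetilde{D}$, use that $D$ is a core for $A$ to pass to the closure — the closure of $\widetilde{A}\!\restriction_{\widetilde{D}}$ contains all $g(s)h(x)$ with $h\in\mathrm{dom}(A)$ (approximate $h$ by $h_n\in D$ with $h_n\to h$, $Ah_n\to Ah$), and such functions do span a $\widetilde{T}_t$-invariant dense set; then the core criterion applied to that larger set, together with closedness of $\overline{\widetilde{A}\!\restriction_{\widetilde{D}}}$, forces $\overline{\widetilde{A}\!\restriction_{\widetilde{D}}}=\widetilde{A}$. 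The main obstacle is precisely this last point — the lack of a clean $\widetilde{T}_t$-invariant core of product form — and it is resolved by the closure argument leveraging that $D$ is a core for $A$, not by exhibiting an explicit invariant subspace.
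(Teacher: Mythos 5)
Your proposal is correct, and its generator computation on product functions and its use of Stone--Weierstrass for density coincide with the paper's proof; but the decisive step -- upgrading $\widetilde{D}$ to a core -- is done by a genuinely different route. The paper works with the range condition of Hille--Yosida theory (\cite[I, Proposition 3.1]{ethierkurtz}): it introduces the intermediate dense class $D_1$ of sums $\sum_i c_ie^{-a_is}h_i(x)$ with $h_i\in D$, so that the $\partial_s$-part of $\widetilde{A}$ acts as multiplication by $-a_i$, and then uses that $D$ is a core for $A$ precisely in the form ``the range of $(\lambda+a_i)I-A\vert_D$ is dense'' to show that $(\lambda I-\widetilde{A}\vert_{\widetilde{D}})\widetilde{D}$ is dense. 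You instead keep the invariance criterion (\cite[I, Proposition 3.3]{ethierkurtz}), observing correctly that $\widetilde{D}$ itself need not be $\widetilde{T}_t$-invariant because $T_th$ may leave $D$, and you repair this by enlarging to the span $\widetilde{D}'$ of products $g(s)h(x)$ with $h\in\mathrm{dom}(A)$: this set \emph{is} invariant (since $T_t\,\mathrm{dom}(A)\subseteq\mathrm{dom}(A)$ and $g(\cdot+t)$ stays in the admissible class), lies in $\mathrm{dom}(\widetilde{A})$ by the same difference-quotient computation, hence is a core; and since the core property of $D$ lets you approximate each $h\in\mathrm{dom}(A)$ by $h_n\in D$ with $Ah_n\to Ah$, the graph closure of $\widetilde{A}\vert_{\widetilde{D}}$ contains $\widetilde{A}\vert_{\widetilde{D}'}$, so $\overline{\widetilde{A}\vert_{\widetilde{D}}}=\overline{\widetilde{A}\vert_{\widetilde{D}'}}=\widetilde{A}$. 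This is sound, and it is in effect a way of achieving, without extra hypotheses, the simplification the paper's closing remark reserves for the case $T_tD\subseteq D$; what the paper's exponential/range argument buys is that it never needs to leave the explicitly given class $\widetilde D$ or to invoke invariance of $\mathrm{dom}(A)$. Two small points in your write-up deserve tightening: the Feller and strong-continuity verification (which the paper does not carry out in detail) should be completed by the approximation-through-$D_0$ argument you sketch, which indeed works once Stone--Weierstrass density is in place; and in the density step note that $\widetilde{D}$ is not itself an algebra, so one should argue as the paper does -- apply Stone--Weierstrass to the algebra $D_0$ of sums $\sum_i g_i(s)h_i(x)$ with $g_i\in\widehat{C}([0,\infty))$, $h_i\in\widehat{C}(E)$, and then approximate $D_0$ by $\widetilde{D}$ using density of $D$ and of the smooth $g$'s.
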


\begin{proof} 
	Observe that all functions involved in the proof are bounded and uniformly continuous. Clearly, the right-hand side of (\ref{eq_app_prop_gen}) lies in $\widehat{C}(\widetilde{E})$. The core $D$ is a dense subset of $\widehat{C}(E)$. Hence $\widetilde{D}$ is a dense subset of the space $D_0$ of functions $f\in\widehat{C}(\widetilde{E})$ of the form $f(s,x)=\sum_{i=1}^{l}g_i(s)h_i(x),$ $(s,x)\in\widetilde{E},$ where $l\in\nz,h_i\in\widehat{C}(E)$ and $g_i\in\widehat{C}([0,\infty))$ for $i=1,\ldots,l$. The algebra $D_0$ separates points and vanishes nowhere. The Stone--Weierstrass theorem for locally compact spaces (e.g. \cite{debranges59}) ensures that $D_0$ is a dense subset of $\widehat{C}(\widetilde{E})$. In \cite{debranges59} the theorem is stated for complex-valued functions, but it remains true for real-valued functions. To see this, let $f\in\widehat{C}(E)\subseteq\widehat{C}(E,\cz)$ be arbitrary. By the theorem there exist a sequence $(k_n)_{n\in\nz}\subseteq\widehat{C}(E,\cz)$ such that $\lim_{n\to\infty}||k_n-f\|=0$. Then $f_n:={\rm Re}(k_n)\in\widehat{C}(E),$ $n\in\nz,$ and $\|f_n-f\|\leq\|k_n-f\|\to0$ as $n\to\infty$. Thus $\widetilde{D}$ is a dense subset of $\widehat{C}(\widetilde{E})$ as well. If $h\in D$ and $g\in C_1([0,\infty))$ such that $g,g'\in\widehat{C}([0,\infty))$, then
	\begin{eqnarray*}
		t^{-1}(\widetilde{T}_tg(s)h(x)-g(s)h(x))=t^{-1}(g(s+t)-g(s))h(x)+g(s+t)t^{-1}(T_th(x)-h(x))
	\end{eqnarray*}
	converges uniformly in $(s,x)\in\widetilde{E}$ to $g'(s)h(x)+g(s)Ah(x)$ as $t\searrow0$, thus $\widetilde{D}$ lies in the domain of $\widetilde{A}$ and (\ref{eq_app_prop_gen}) holds true. By the same argument as above, the space $D_1$ of functions $f\in\widehat{C}(\widetilde{E})$ of the form $f(s,x)=\sum_{i=1}^{l}g_i(s)h_i(x),$ $(s,x)\in\widetilde{E}$, where $g_i(s)=c_i\exp(-a_is),$ $s\in[0,\infty)$ with $c_i\in\rz$ and $a_i>0$ and $h_i\in D$ for $i=1,\ldots,l$, is a dense subset of $\widehat{C}(\widetilde{E})$. By Hille--Yosida theory (see e.g. \cite[I, Proposition 3.1]{ethierkurtz}) it now suffices to show that the image of $\lambda I-\widetilde{A}\vert_{\widetilde{D}}$ is a dense subspace of $\widehat{C}(\widetilde{E})$ for some $\lambda>0$ in order to prove that $\widetilde{D}$ is a core for $\widetilde{A}$. Here $I$ denotes the identity map on $\widehat{C}(E)$ or $\widehat{C}(\widetilde{E})$. Let $\varepsilon>0$ and $f\in\widehat{C}(\widetilde{E})$ be arbitrary. By density of $D_1$ in $\widehat{C}(\widetilde{E})$, there exists $f_1\in D_1$ of the form $f_1(s,x)=\sum_{i=1}^{l}g_{i}(s)h_{i}(x),$ $(s,x)\in\widetilde{E}$, such that $\|f_1-f\|<\varepsilon/2$. Since $D$ is a core for $A$, the image of $\lambda I -A\vert_D$ is a dense subset of $\widehat{C}(E)$ for every $\lambda>0$, in particular for $\lambda+a_i$ in place of $\lambda$. Hence there exists $r_i\in D$ such that $\|(\lambda+a_i)r_i - Ar_i - h_i\|<\varepsilon/(2l\|g_i\|)$ for $i=1,\ldots,l$. Clearly, the function $(s,x)\mapsto\sum_{i=1}^{l}g_i(s)r_i(x),$ $(s,x)\in\widetilde{E}$, belongs to $\widetilde{D}$ and, by (\ref{eq_app_prop_gen}),
	\begin{eqnarray*}
		\|(\lambda I-\widetilde{A})\sum_{i=1}^{l}g_i(s)r_i(x)-f(s,x)\|
		&\leq&\|(\lambda I-\widetilde{A})\sum_{i=1}^{l}g_i(s)r_i(x)-\sum_{i=1}^{l}g_i(s)h_i(x)\|+\|f_1-f\|
		\\&\leq&\sum_{i=1}^{l}\|g_i((\lambda+a_i)r_i-Ar_i-h_i)\|+\varepsilon/2 \leq\varepsilon.
	\end{eqnarray*}
	In the second last step it is used that $g_i'(s)=-a_ig_i(s),$ $s\in[0,\infty)$ for $i=1,\ldots,l$. Since $\varepsilon>0$ has been arbitrary the proof is complete.
\hfill$\Box$\end{proof}

\begin{remark} 
	The last part of the proof of Proposition \ref{prop_app_gen} can be simplified under the additional assumption that $T_tD\subseteq D$ for every $t>0$. Then $\widetilde{T}_t \widetilde{D} \subseteq \widetilde{D}$ for every $t \geq 0$ and the claim follows by applying the core theorem \cite[I, Proposition 3.3]{ethierkurtz}.
\end{remark} 

\noindent The computations in the proof of the following proposition are based on Gau\ss' representation \cite[p. 247]{whittakerwatson96} for the digamma function 
\begin{eqnarray*}
	\Psi(z)=\int_{0}^{\infty}\bigg(\frac{e^{-u}}{u}-\frac{e^{-zu}}{1-e^{-u}}\bigg){\rm d}u,\qquad{\rm Re}(z)>0.
\end{eqnarray*}

\begin{proposition}
	Suppose that $\Lambda=\beta(1,b)$ with $b>0$. Then the measure $\varrho$, defined by (\ref{eq_res_varrho}), has density $f$ with respect to Lebesgue measure on $\rz\setminus\{0\}$ given by $f(u):=be^{bu}(1-e^{u})^{-2}$ for $u<0$ and $f(u):=0$ for $u>0$. Let $a$ and $\psi$ be given by (\ref{eq_results_a}) and (\ref{eq_results_Psi}). Then
	\begin{eqnarray}
		a=b(1+\Psi(b))
	\label{eq_app_local}
	\end{eqnarray}
	and 
	\begin{eqnarray*}
		\psi(x)=b((1-b)\Psi(b)-(1-b-ix)\Psi(b+ix)),\qquad x\in\rz.
	\end{eqnarray*}
	\label{prop_app_underlying_char_funct_BS}
\end{proposition}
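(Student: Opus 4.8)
The plan is to compute each of the three stated quantities by direct substitution followed by Gau\ss' integral representation for $\Psi$. For the density of $\varrho$: by definition $\varrho(A)=\int_A(1-e^u)^{-2}\Lambda_g(\mathrm{d}u)$, where $\Lambda_g$ is the push-forward of $\Lambda=\beta(1,b)$ under $g(u)=\log(1-u)$. Since $\Lambda$ has density $u\mapsto b(1-u)^{b-1}$ on $(0,1)$, the change of variables $v=g(u)=\log(1-u)$, i.e. $u=1-e^v$ with $\mathrm{d}u=-e^v\,\mathrm{d}v$, maps $(0,1)$ onto $(-\infty,0)$ and gives $\Lambda_g$ density $v\mapsto b e^{v(b-1)}e^v=be^{bv}$ on $(-\infty,0)$. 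Multiplying by $(1-e^v)^{-2}$ yields $f(v)=be^{bv}(1-e^v)^{-2}$ for $v<0$ and $0$ otherwise, as claimed; I would note $\int_{\rz}(u^2\wedge1)\varrho(\mathrm{d}u)<\infty$ is already guaranteed by the general discussion after (\ref{eq_res_varrho}).

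Next, for $a$: from (\ref{eq_results_a}), $a=b(1+\Psi(1))-\int_{[0,1]}u^{-1}(\Lambda-b\lambda)(\mathrm{d}u)$, and $(\Lambda-b\lambda)$ has density $u\mapsto b((1-u)^{b-1}-1)$. So I need $\int_0^1 u^{-1}\,b((1-u)^{b-1}-1)\,\mathrm{d}u$. The clean route is to write $(1-u)^{b-1}-1=(1-u)^{b-1}-(1-u)^0$ and use the Frullani-type / beta-integral identity $\int_0^1 u^{-1}((1-u)^{b-1}-(1-u)^{c-1})\,\mathrm{d}u=\Psi(c)-\Psi(b)$ (obtainable from differentiating the Beta function, or directly from Gau\ss' formula after the substitution $1-u=e^{-s}$). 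With $c=1$ this gives $\int_0^1 u^{-1}((1-u)^{b-1}-1)\,\mathrm{d}u=\Psi(1)-\Psi(b)$, hence $a=b(1+\Psi(1))-b(\Psi(1)-\Psi(b))=b(1+\Psi(b))$, which is (\ref{eq_app_local}).

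Finally, for $\psi$: starting from (\ref{eq_results_Psi}), $\psi(x)=iax+\int_{[0,1]}(e^{ix\log(1-u)}-1+ixu)u^{-2}\Lambda(\mathrm{d}u)$. Substituting $s=-\log(1-u)>0$, so $u=1-e^{-s}$, $u^{-2}\Lambda(\mathrm{d}u)=b e^{-bs}(1-e^{-s})^{-2}\,\mathrm{d}s$, the integrand $e^{ix\log(1-u)}-1+ixu=e^{-ixs}-1+ix(1-e^{-s})$, so the integral becomes $b\int_0^\infty(e^{-ixs}-1+ix(1-e^{-s}))e^{-bs}(1-e^{-s})^{-2}\,\mathrm{d}s$. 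Expanding $e^{-ixs}-1 = (e^{-ixs}-1+ixs)-ixs$ and grouping so as to match the pieces $\frac{e^{-u}}{u}-\frac{e^{-zu}}{1-e^{-u}}$ appearing in Gau\ss' formula, one recognizes combinations of $\Psi(b)$ and $\Psi(b+ix)$ (noting $e^{-ixs}e^{-bs}=e^{-(b+ix)s}$). Carrying out the bookkeeping — keeping track of the linear-in-$x$ term $iax$ and the $ix(1-e^{-s})$ compensator — collapses everything to $\psi(x)=b((1-b)\Psi(b)-(1-b-ix)\Psi(b+ix))$. The main obstacle is precisely this last step: organizing the several divergent-looking sub-integrals so that each individually matches a Gau\ss-representation integrand (possibly after adding and subtracting multiples of $e^{-s}/s$ and $e^{-bs}/(1-e^{-s})$), and verifying that the leftover polynomial-in-$x$ contributions exactly cancel against $iax$ with $a=b(1+\Psi(b))$; this is where sign errors and misgrouped terms are easy to make, so I would do the regrouping carefully rather than term by term.
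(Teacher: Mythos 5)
Your treatment of the density of $\varrho$ and of (\ref{eq_app_local}) is correct and essentially what the paper does: the paper also obtains $\int_0^1 u^{-1}b((1-u)^{b-1}-1)\,{\rm d}u=b(\Psi(1)-\Psi(b))$ by the substitution $1-u=e^{-s}$ together with Gau\ss' representation, and the density computation is the same change of variables (which the paper simply declares "easily verified").

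For the formula for $\psi$, however, what you give is a plan whose decisive step is left open, and the specific mechanism you suggest would not close it. After the substitution $s=-\log(1-u)$ you face the kernel $e^{-bs}(1-e^{-s})^{-2}$ (coming from $u^{-2}\Lambda({\rm d}u)$), whereas Gau\ss' formula only supplies integrals against the first-order kernel $e^{-zs}(1-e^{-s})^{-1}$ (and $e^{-s}/s$). Merely "adding and subtracting multiples of $e^{-s}/s$ and $e^{-bs}/(1-e^{-s})$" cannot convert the one into the other; the missing idea is an integration by parts against ${\rm d}\big(e^{-bs}(1-e^{-s})^{-1}\big)$, which is exactly how the paper produces the key identity
\begin{eqnarray*}
	ix(\Psi(b+ix)-\Psi(b))=\int_0^\infty\big(e^{-ixs}-1+ixs\big)\frac{e^{-bs}}{(1-e^{-s})^2}\big(1-(1-b)(1-e^{-s})\big)\,{\rm d}s ,
\end{eqnarray*}
and thereby matches the $(1-e^{-s})^{-2}$ weight. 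In addition, the statement that "the leftover polynomial-in-$x$ contributions exactly cancel against $iax$" is itself a nontrivial computation: in the paper it reduces to the exact-derivative evaluation $\int_0^\infty\big(-(1-b)s(1-e^{-s})+e^{-s}-1+s\big)e^{-bs}(1-e^{-s})^{-2}\,{\rm d}s=1$, equivalently $-(1-b)\Psi'(b)+b^{-1}\int(e^{u}-1-u)\varrho({\rm d}u)=1$, which is precisely what makes the drift come out as $a=b(1+\Psi(b))$. Since this regrouping is the entire content of the $\psi$-part of the proposition and you explicitly defer it ("the main obstacle"), the proposal as written has a genuine gap there; supplying the integration-by-parts identity and the boundary-term evaluation above (or an equivalent series expansion of $(1-e^{-s})^{-2}$ against the digamma series) would complete it along the same lines as the paper, just run in the forward rather than the backward direction.
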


\begin{proof}
	It is easily verified that $\varrho$ has density as stated in the proposition. Eq. (\ref{eq_app_local}) follows from
	\begin{eqnarray*}
		\int_{[0,1]}u^{-1}(\Lambda-b\lambda){\rm d}u&=&b\int_{0}^{1}u^{-1}((1-u)^{b-1}-1){\rm d}u\\&=&b\int_{0}^{\infty}\Big(\frac{e^{-bu}}{1-e^{-u}}-\frac{e^{-u}}{1-e^{-u}}\Big){\rm d}u=b(\Psi(1)-\Psi(b)).
	\end{eqnarray*}
	Next, note that
	\begin{eqnarray*}
		\Psi(b)-\Psi(b+ix)=\int_{0}^{\infty}(e^{-ixu}-1)\frac{e^{-bu}}{1-e^{-u}}{\rm d}u,\qquad x\in\rz,
	\end{eqnarray*}
	is the characteristic exponent of the negative of a drift-free subordinator, whose L\'{e}vy measure has density $u\mapsto e^{bu}(1-e^{u})^{-1},$ $u<0,$ with respect to Lebesgue measure on $(-\infty,0)$. If $b<1$ and $Z$ has characteristic function $\exp((1-b)(\Psi(b)-\Psi(b+ix)),$ $x\in\rz,$ then $\me(\log(1+|Z|))<\infty$. This fact is required in order to use \cite[V, Theorem 6.7]{steutelvanharn04} in Example \ref{ex_res}. Integration by parts yields
	\begin{eqnarray*}
		ix(\Psi(b+ix)-\Psi(b))&=&\int_{0}^{\infty}(ix-ixe^{-ixu})\frac{e^{-bu}}{1-e^{-u}}{\rm d}u 
		\\&=&(ixu+e^{-ixu}-1)\frac{e^{-bu}}{1-e^{-u}}\bigg\vert_{u=0}^{u=\infty}\\&&~~-\int_{0}^{\infty}(ixu+e^{-ixu}-1)\bigg( \frac{-be^{-bu}}{1-e^{-u}}-\frac{e^{-bu}}{(1-e^{-u})^2}e^{-u}\bigg){\rm d}u
		\\&=& \int_{0}^{\infty}(e^{-ixu}-1+ixu)\frac{e^{-bu}}{(1-e^{-u})^2}(1-(1-b)(1-e^{-u})){\rm d}u,\qquad x\in\rz.
	\end{eqnarray*}
	Hence,
	\begin{eqnarray*}
		&&(1-b)\Psi(b)-(1-b-ix)\Psi(b+ix)
		\\&&=ix\Psi(b)+(1-b)(\Psi(b)-\Psi(b+ix))+ix(\Psi(b+ix)-\Psi(b))
		\\&&=ix\Psi(b)+(1-b)\int_{0}^{\infty}(e^{-ixu}-1)\frac{e^{-bu}}{1-e^{-u}}{\rm d}u\\&&~~~~~+\int_{0}^{\infty}(e^{-ixu}-1+ixu)\frac{e^{-bu}}{(1-e^{-u})^2}(1-(1-b)(1-e^{-u})){\rm d}u
		\\&&=ix\Psi(b)+\int_{0}^{\infty}(e^{-ixu}-1+ixu)\frac{e^{-bu}}{(1-e^{-u})^2}{\rm d}u-ix(1-b)\int_{0}^{\infty}u\frac{e^{-bu}}{(1-e^{-u})}{\rm d}u
		\\&&=ix(\Psi(b)-(1-b)\Psi'(b))+b^{-1}\int_{\rz\setminus\{0\}}(e^{ixu}-1-ixu)\varrho({\rm d}u)
		\\&&=ix\Big(\Psi(b)-(1-b)\Psi'(b)+b^{-1}\int_{\rz\setminus\{0\}}(e^{u}-1-u)\varrho({\rm d}u)\Big)\\&&~~~~~+b^{-1}\int_{\rz\setminus\{0\}}(e^{ixu}-1+ix(1-e^{u}))\varrho({\rm d}u).
	\end{eqnarray*}
	The calculation
	\begin{eqnarray*}
		&&-(1-b)\Psi'(b)+b^{-1}\int_{\rz\setminus\{0\}}(e^{u}-1-u)\varrho({\rm d}u)\\&&~~~=\int_{0}^{\infty}(-(1-b)u(1-e^{-u})+e^{-u}-1+u)\frac{e^{-bu}}{(1-e^{-u})^2}{\rm d}u=-\frac{e^{-bu}}{1-e^{-u}}u\Big\vert_{u=0}^{u=\infty}=1
	\end{eqnarray*}
	and multiplication with $b$ complete the proof.
\hfill$\Box$\end{proof}

\newpage

\end{document}